\documentclass[a4paper,11pt,oneside]{article} 
\usepackage{amssymb}
\usepackage{color} \usepackage{mathrsfs} \let\mathcal\mathscr
\usepackage[all,ps,cmtip]{xy}
 \usepackage[top=1in, bottom=1in, left=1in, right=1in]{geometry}

\usepackage{tikz}
\usepackage{tikz-cd}

\title{\sc Hirzebruch signature theorem on Hochschild homology, with applications to derived invariance of Hodge numbers}
\author{\sc Roland Abuaf \footnote{Rectorat de Paris, 47 rue des \'Ecoles, 75005 Paris. \textit{rabuaf@gmail.com}}}
\date{}

\usepackage[T1]{fontenc}
\usepackage{amssymb}

\usepackage{amsmath}
\usepackage{latexsym}

\usepackage[latin1]{inputenc}
\usepackage{graphicx,txfonts}

\everymath{\displaystyle}
\newtheorem{theo}{Theorem}[section]
\newtheorem{theo*}{Theorem}
\newtheorem{conj*}{Conjecture}
\newtheorem{prob*}{Problem}
\newtheorem{prop*}{Proposition}
\newtheorem{quest*}{Question}

\newtheorem{rem}[theo]{Remark}

\newtheorem{prop}[theo]{Proposition}

\newtheorem{defi}[theo]{Definition}

\newtheorem{lem}[theo]{Lemma}
\newtheorem{cor}[theo]{Corollary}
\newtheorem{cor*}{Corollary}

\def\OM{\Omega}

\def\OO{\mathcal{O}}

\def\C{\mathbb{C}}
\def\R{\mathbb{R}}
\def\p{\varphi}

\newenvironment{proof}
{
\noindent
\textit{\underline{Proof}}:\\
$\blacktriangleright\;$%
}
{\hspace{\stretch{1}}%
$\blacktriangleleft$}

{
\noindent
\textit{\underline{Proof of Theorem #1}} :\\
$\blacktriangleright\;$%
}
{\hspace{\stretch{1}}%
$\blacktriangleleft$ \bigskip}

\begin{document}

\maketitle

\begin{abstract}
We prove a refinement of Hirzebruch's signature formula on the individual Hochschild diagonals of a smooth projective complex variety. Natural Hermitian forms, obtained by symmetrizing a Todd-corrected pairing, have explicitly computable positive and negative indices and are preserved by derived equivalences. On diagonals of the same parity as the dimension, their signatures separate the even and odd contributions to the Hochschild-Kostant-Rosenberg decomposition; on the other diagonals, their radicals and signatures are governed by the first Chern class. We deduce the derived invariance of $h^{n-2,0}$, $h^{n-1,1}$ and $h^{n-3,1}$ in any dimension, and of $h^{2,0}$ when the canonical bundle is trivial.

Combining the invariance of $h^{2,0}$ with the multiplicative structure of Hochschild cohomology and Verbitsky's orthogonal action, we give a new short proof of the theorem of Huybrechts and Nieper-Wi\ss kirchen that a derived partner of an irreducible holomorphic symplectic variety is again irreducible holomorphic symplectic. In dimension five with trivial canonical bundle, the signature invariants and the Libgober-Wood identity together establish the derived invariance of all Hodge numbers
\end{abstract}

\vspace{\stretch{1}}

\newpage

\section{Introduction}

The main result of this paper is a signature theorem on the individual Hochschild diagonals of a smooth projective complex variety. It refines Hirzebruch's signature formula by retaining the positive and negative indices on each diagonal, and provides new derived invariants of Hodge numbers in arbitrary dimension. Throughout the paper, varieties are connected, and derived equivalences are exact and $\C$-linear. Let $X$ be a smooth projective complex variety of dimension $n$. Put
\[
V_k(X)=\bigoplus_{p-q=k}H^{p,q}(X).
\]
The spaces $V_k(X)$ are the graded pieces of Hochschild homology, up to the sign convention for its grading. We consider the Hermitian forms
\[
\mathcal H_{k,X}(u,v)=\int_Xu^\vee\wedge\overline v\wedge
\frac{\operatorname{td}(X)+(-1)^{n+k}\operatorname{td}(X)^\vee}{2},
\]
for any $(u,v) \in V_k(X)^2$, with $u^\vee=i^j u$ for $u\in H^j(X,\C)$. They are defined without an auxiliary polarization. The following theorem combines their signature calculation with their derived invariance.

\begin{theo*} \label{theo1}
Let $X$ be a smooth projective complex variety of dimension $n$. If $k\equiv n\pmod2$, the form $\mathcal H_{k,X}$ is non-degenerate and
\[
\operatorname{sign}(\mathcal H_{k,X})=
\displaystyle\left(\sum_{\substack{p-q=k\\q\ \mathrm{even}}}h^{p,q}(X),\ \sum_{\substack{p-q=k\\q\ \mathrm{odd}}}h^{p,q}(X)\right), \ \textrm{if} \ n\equiv0,1\pmod4,
\]
and
\[
\operatorname{sign}(\mathcal H_{k,X})=
\displaystyle\left(\sum_{\substack{p-q=k\\q\ \mathrm{odd}}}h^{p,q}(X),\ \sum_{\substack{p-q=k\\q\ \mathrm{even}}}h^{p,q}(X)\right), \ \textrm{if} \ n\equiv2,3\pmod4.
\]
For every $k$, a derived equivalence $D^b(X)\simeq D^b(Y)$ induces an isometry
\[
(V_k(X),\mathcal H_{k,X})\simeq(V_k(Y),\mathcal H_{k,Y}).
\]
\end{theo*}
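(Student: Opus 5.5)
We argue in two parts: first the signature computation on $V_k(X)$ for $k\equiv n\pmod 2$, then derived invariance for all $k$.

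\emph{Signature.} Write $\operatorname{td}(X)=1+t$ with $t\in\bigoplus_{j\geq1}H^{j,j}(X)$. Since $\operatorname{td}^\vee$ acts on $H^{2j}$ by $(-1)^j$, the symmetrized class $\tau_k=\frac{1}{2}(\operatorname{td}+(-1)^{n+k}\operatorname{td}^\vee)$ equals $\sum_{j\ \mathrm{even}}\operatorname{td}_j$ when $n+k$ is even. Thus $\tau_k=1+N$ with $N$ a sum of classes in $H^{2j,2j}$, $j\geq 1$.

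Pass to the Hodge decomposition $V_k=\bigoplus_{p-q=k}H^{p,q}$ and consider the leading form
\[
\mathcal H^0(u,v)=\int_X u^\vee\wedge\overline v .
\]
For $u\in H^{p,q}$ and $v\in H^{p',q'}$ with $p-q=p'-q'=k$, the integrand lies in $H^{p+q',q+p'}$. It is nonzero only if $p+q'=n=q+p'$. Combined with $p'-q'=k=p-q$, this gives $2p=n+k$, hence $p=(n+k)/2$ and $q=(n-k)/2$. So $\mathcal H^0$ pairs the piece $H^{p,q}$ with itself, and the other pieces are orthogonal for $\mathcal H^0$ alone.

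The correction $N$ raises bidegree by $(2j,2j)$. It therefore pairs $H^{p,q}$ with $H^{p-2j,q-2j}$, i.e. pieces whose $q$ differ by an even number. This suggests the following argument:
\begin{itemize}
\item Order the summands by $q$. The Gram matrix is block anti-triangular: with respect to the filtration by $q$, the pairing $H^{p,q}\times H^{p',q'}$ vanishes unless $q+q'\le n-k$, and only the antidiagonal $q+q'=n-k$ is the pure Poincar\'e pairing.
\item The antidiagonal pairing $H^{p,q}\times \overline{H^{q+k',\,\cdot}}$ should be identified with Serre duality pairing $H^{p,q}$ with $H^{n-p,n-q}$, twisted by conjugation, which sends $H^{n-p,n-q}$ to $H^{n-q,n-p}\cong\overline{\ }$ of a piece of $V_k$.
\item Deform: replace $N$ by $sN$ for $s\in[0,1]$. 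Anti-triangularity keeps the form non-degenerate for all $s$, so the signature is constant, and at $s=0$ it equals that of $\mathcal H^0$.
\end{itemize}

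For $\mathcal H^0$, the pieces $H^{p,q}$ and $H^{q',p'}$ with $q+q'=n-k$ pair with each other. Precisely, $H^{p,q}$ pairs with $H^{p',q'}$ where $p'=n-q$ and $q'=n-p$. The difference $q'-q=n-p-q$ has the parity of $n+k$, which is even, so paired pieces share the parity of $q$.

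A pair of distinct blocks ($q\neq q'$) forms a hyperbolic plane and contributes $(h,h)$ to the signature. This matches the count claimed in the theorem, since $h^{p,q}=h^{p',q'}$ by Hodge symmetry and Serre duality, and both pieces have $q$ of the same parity. A self-paired block has $p+q=n$, i.e. middle cohomology. On it we apply the Hodge--Riemann bilinear relations for primitive decomposition, and the factor $i^j$ in $u^\vee$ fixes the sign as $(-1)^{q}$ times a constant depending on $n\bmod 4$. Computing $i^{n}\cdot i^{p-q}$-type phases gives the sign $(-1)^q$ for $n\equiv0,1\pmod4$ and the opposite sign otherwise.

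The main obstacle is this sign bookkeeping. Hodge--Riemann gives definiteness only on primitive classes, and the Lefschetz powers alternate. However, $L^r\alpha$ with $\alpha\in P^{p-r,q-r}$ has $q$ shifted by $r$ while the sign also flips by $(-1)^r$, so the pattern $(-1)^q$ survives. I would verify this carefully via the standard formula $i^{p-q}(-1)^{d(d-1)/2}\int L^{n-d}\alpha\wedge\overline\alpha>0$.

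\emph{Derived invariance.} A Fourier--Mukai equivalence $\Phi_E$ induces the cohomological transform $v(E)=\operatorname{ch}(E)\sqrt{\operatorname{td}}$, which by C\u{a}ld\u{a}raru's compatibility with the twisted HKR map respects the grading $V_k$. By the Mukai pairing, $\langle u,v\rangle=\int u^\vee\wedge v\wedge e^{c_1/2}$ is preserved by this transform. The plan is then:
\begin{itemize}
\item Use Mukai vectors twisted by $\sqrt{\operatorname{td}}$, combined with complex conjugation; conjugation commutes with the transform since $\operatorname{ch}(E)$ is real.
\item Rewrite $\mathcal H_k$ as a combination of the pairings $\int u^\vee\,\overline v\operatorname{td}$ and $\int u^\vee\,\overline v\operatorname{td}^\vee$, i.e. $\chi$-type pairings of the untwisted HKR images $I^X$ versus twisted ones. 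Their invariance follows from adjunction: $\chi(\Phi a,\Phi b)=\chi(a,b)$ together with Grothendieck--Riemann--Roch.
\item On $V_k$, $\operatorname{td}^\vee$ contributes the sign $(-1)^{n+k}$ relative to the dual, via the standard identity $\int u^\vee v\operatorname{td}=\pm\int u\, v^\vee\operatorname{td}^\vee$. Symmetrizing therefore makes the form Hermitian and compatible with the Mukai pairing up to this sign, and isometry follows.
\end{itemize}
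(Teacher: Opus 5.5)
\textbf{The gap: the signature of $\mathcal H^0$.} Both of your local claims here are false.

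First, a block $H^{p,q}\oplus H^{n-q,n-p}$ with $p+q<n$ is indeed hyperbolic, of signature $(h^{p,q},h^{p,q})$. But $q$ and $n-p$ have the same parity, so the theorem puts all $2h^{p,q}$ of these dimensions into the \emph{same} index. This does not ``match the count''.

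Second, the middle piece $H^{p,q}$ with $p+q=n$ is not definite. For $\alpha\in P^{p-r,q-r}$, the value $\mathcal H^0(L^r\alpha,L^r\alpha)$ has sign $(-1)^{q-r+n(n-1)/2}$. The class $L^r\alpha$ has bidegree $(p,q)$ for every $r$, so nothing compensates the factor $(-1)^r$ coming from $(-1)^{d(d-1)/2}$ with $d=n-2r$. What shifts with $r$ is the index of the primitive class, not $q$. For a surface, $\mathcal H^0=-\int u\wedge\overline v$ on $H^{1,1}$ is negative on the ample class and positive on primitive classes, by the Hodge index theorem.

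Taken literally, your two claims give wrong signatures. On $V_{n-2}$ with $n\equiv0,1\pmod4$ they yield $(h^{n-2,0},\,h^{n-2,0}+h^{n-1,1})$ instead of $(2h^{n-2,0},\,h^{n-1,1})$. The true local contributions are $(h^{p,q},h^{p,q})$ per pair, and on the middle piece $\dim P^{p-r,q-r}$ counted with the sign of $(-1)^{q-r+n(n-1)/2}$. These only add up to the even/odd sums after summing and using $h^{a,b}-h^{a-1,b-1}=\dim P^{a,b}$ for $a+b\le n$.

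The paper avoids this by decomposing $V_k$ into Lefschetz chains rather than Hodge bidegrees: $W_{p,q,j}=\langle\alpha,L\alpha,\ldots,L^{n-p-q}\alpha\rangle$ on a $Q_{p,q}$-orthonormal basis of each $P^{p,q}$. These chains are mutually $\mathcal H^0$-orthogonal. Each is $m$ hyperbolic planes $\langle L^r\alpha,L^{2m-r}\alpha\rangle$ plus a central line of sign $(-1)^{q+n(n-1)/2}$. Its signature is therefore $(m+1,m)$ or $(m,m+1)$, which is exactly the number of even and odd integers among $q,q+1,\ldots,q+2m$ along the chain.

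\textbf{Remaining points.} The rest is sound after small repairs.
\begin{itemize}
\item Your first orthogonality computation is wrong. The conditions $p+q'=n=q+p'$ and $p-q=p'-q'=k$ are dependent and do not force $2p=n+k$; at face value the claim would make $\mathcal H^0$ degenerate. You correct this implicitly later.
\item The deformation $1+sN$ is a valid alternative to the paper's congruence $\mathcal H_{k,X}(u,v)=\mathcal H^0_{k,X}(A_Xu,A_Xv)$ with $A_X=\sqrt{(\operatorname{td}(X)+\operatorname{td}(X)^\vee)/2}$. You should note that each $\mathcal H_s$ is Hermitian, which holds because $N$ is real and lives in degrees divisible by $4$, so $N^\vee=N$. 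The square root gives an explicit isometry instead of a continuity argument.
\item For derived invariance you follow the paper's route through C\u{a}ld\u{a}raru's adjunction, but be precise about the map. The isometry for $\mathcal T_k$ and $\mathcal H_k$ is the $\operatorname{td}(X)$-normalized transform $u\mapsto p_{Y*}(p_X^*(u\wedge\operatorname{td}(X))\wedge\operatorname{ch}(\mathcal P))$. This is the Mukai transform conjugated by multiplication by $\sqrt{\operatorname{td}}$; the Mukai transform itself is an isometry for the Mukai pairing, which is a different form.
\item The identity $\chi(\Phi a,\Phi b)=\chi(a,b)$ on objects only sees algebraic classes in $V_0$. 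For arbitrary classes you need the cohomological adjunction formula, via the adjoint kernel $\mathcal P^\vee\otimes p_X^*K_X[n]$ and the projection formula.
\end{itemize}
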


Multiplication by an invertible characteristic class reduces the calculation to an uncorrected pairing. Lefschetz decomposition and the Hodge-Riemann bilinear relations then determine its signature. In even complex dimension, summing the differences of indices with the appropriate sign recovers Hirzebruch's formula for the topological signature \cite[Theorem 15.8.2]{Hir66}. The theorem also gives Hermitian signatures in odd complex dimension. Derived invariance follows from adjointness for cohomological Fourier-Mukai transforms in the normalization supplied by Grothendieck-Riemann-Roch, as proved by C{\u{a}}ld{\u{a}}raru \cite{Cal05}. \\

The motivation for Theorem \ref{theo1} comes from the conjecture, usually attributed to Kontsevich and motivated by Homological Mirror Symmetry \cite{Kon95}, that derived-equivalent smooth projective complex varieties have the same Hodge numbers. Hochschild homology determines only the sums $\sum_{p-q=k}h^{p,q}$; together with Hodge symmetry and duality, these settle the conjecture in dimension at most two. Popa and Schnell's derived invariance of $h^{1,0}$ \cite{PS11} gives the threefold case. In \cite{Abu17}, I proved the derived invariance of every $h^{p,0}$ in dimension at most four. The argument uses the scalar action and trace for the image of a line bundle of non-zero rank; the existence of such an image in dimension four follows from the Hodge-Riemann bilinear relations. Consequently, in dimension four only $h^{1,1}$ and $h^{2,2}$ remained undetermined, with $2h^{1,1}+h^{2,2}$ already invariant. Other approaches, using generic vanishing and the Albanese morphism, give further invariance results for irregular varieties \cite{Popa13,LP15,CP19,CLP23}. In positive characteristic the conjecture fails, as shown by Addington and Bragg, with an appendix by Petrov \cite{AB23}.\\

The starting point of the present work was the additional relation supplied by the topological signature. As explained by Antieau in \cite{Ant26}, several members of an NSF FRG met at UC Berkeley in July 2026. During discussions about artificial intelligence in mathematics, they assembled problems related to their project and submitted them to Codex and Claude Code. Antieau reports that Codex produced a proof of the fourfold case; the decisive observation, attributed there to GPT-5.6 Sol, is that the topological signature is a derived invariant. Combined with \cite{Abu17}, this completes the fourfold case: the signature determines the remaining combination $h^{2,2}-2h^{1,1}$, independently of the Hochschild invariant $2h^{1,1}+h^{2,2}$.\\

Theorem \ref{theo1} above pushes further this observation by computing a separate signature on each Hochschild diagonal. Applied to $k=n-2$ and $k=n-4$, it gives the following consequences, proved in Corollary~\ref{corboundary}.

\begin{cor*}
Let $X$ and $Y$ be derived-equivalent smooth projective complex $n$-folds, with $n\geq2$. Then
\[
\begin{gathered}
h^{n-1,1}(X)=h^{n-1,1}(Y) \\
h^{n-2,0}(X) = h^{n-2,0}(Y) \\
h^{n-3,1}(X)=h^{n-3,1}(Y),\\
\end{gathered}
\]
If $K_X\simeq\mathcal O_X$, then $h^{2,0}(X)=h^{2,0}(Y)$.
\end{cor*}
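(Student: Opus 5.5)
The plan is to apply Theorem~\ref{theo1} on the two diagonals $k=n-2$ and $k=n-4$, which have the same parity as $n$. A derived equivalence $D^b(X)\simeq D^b(Y)$ forces $\dim X=\dim Y=n$, so both varieties fall in the same case $n \bmod 4$ of the theorem. The isometry $(V_k(X),\mathcal H_{k,X})\simeq(V_k(Y),\mathcal H_{k,Y})$ preserves the pair of indices $(n_+,n_-)$. Hence each of the two sums
\[
\sum_{\substack{p-q=k\\ q\ \mathrm{even}}}h^{p,q},\qquad \sum_{\substack{p-q=k\\ q\ \mathrm{odd}}}h^{p,q}
\]
is separately a derived invariant; which of the two is $n_+$ does not matter. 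The rest of the argument is bookkeeping with Hodge symmetry $h^{p,q}=h^{q,p}$ and Serre duality $h^{p,q}=h^{n-p,n-q}$, which together give $h^{p,q}=h^{n-q,n-p}$.

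For $k=n-2$, the pairs $(p,q)$ with $0\le p,q\le n$ and $p-q=n-2$ are $(n-2,0)$, $(n-1,1)$ and $(n,2)$, with the obvious truncations when $n=2$. The odd-$q$ sum is $h^{n-1,1}$, which is therefore invariant. The even-$q$ sum is $h^{n-2,0}+h^{n,2}$. Applying $h^{p,q}=h^{n-q,n-p}$ gives $h^{n,2}=h^{n-2,0}$, so this sum equals $2h^{n-2,0}$, and $h^{n-2,0}$ is invariant. For $n=2$ the diagonal is $k=0$, and the same computation reads $h^{0,0}+h^{2,2}=2$ and $h^{1,1}$, which is consistent.

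For $k=n-4$, the relevant pairs are $(n-4,0)$, $(n-3,1)$, $(n-2,2)$, $(n-1,3)$ and $(n,4)$, discarding any with a negative or out-of-range index. The odd-$q$ sum is $h^{n-3,1}+h^{n-1,3}$. The same symmetry gives $h^{n-1,3}=h^{n-3,1}$, so this sum is $2h^{n-3,1}$, and $h^{n-3,1}$ is invariant. For small $n$ I would check the degenerate cases directly rather than rely on the general pattern. When $n\le 2$ we have $h^{n-3,1}=0$ on both sides. When $n=3$ the diagonal is $k=-1$, and the odd sum is $h^{0,1}+h^{2,3}=2h^{0,1}$, which is consistent.

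Finally suppose $K_X\simeq\mathcal O_X$; then $K_Y$ is trivial as well, because the Serre functor is intertwined by the equivalence. Serre duality together with $\Omega^n_X\simeq\mathcal O_X$ gives $h^{n-2,0}=h^{2,n}=h^{n,2}=h^0$-shifted to $h^{n,2}=\dim H^2(X,\Omega^n_X)=h^{0,2}=h^{2,0}$. The invariance of $h^{2,0}$ therefore follows from that of $h^{n-2,0}$. I expect no real obstacle, since everything rests on Theorem~\ref{theo1}. The only points needing care are the index ranges in low dimension and the observation that the theorem's case distinction is identical for $X$ and $Y$.
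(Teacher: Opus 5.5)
Your proof is correct and follows the paper's argument. You read off the two index sums on $V_{n-2}$ and $V_{n-4}$ from the signature theorem, transfer them through the isometry, fold the duplicated terms with $h^{p,q}=h^{n-q,n-p}$, and for trivial $K_X$ use the Serre functor to get $K_Y\simeq\mathcal O_Y$ and then $h^{n-2,0}=h^{2,0}$; your written chain there is garbled, but the intended $h^{n-2,0}=h^{2,n}=h^{n,2}=\dim H^2(X,K_X)=h^{0,2}=h^{2,0}$ is right. Your explicit check of $h^{n-3,1}$ for $n=2,3$ is a small improvement, since the paper's proof of Corollary~\ref{corboundary} treats that number only for $n\geq4$, although the introductory statement allows $n\geq2$.
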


This yields two applications. First, in dimension five the signatures separate further Hodge numbers. The remaining ambiguity disappears when the canonical bundle is trivial, by the Libgober-Wood identity \cite{LW90}.

\begin{theo*}\label{theomain}
Let $X$ and $Y$ be derived-equivalent smooth projective complex fivefolds. Then
\[
h^{p,q}(X)=h^{p,q}(Y)
\]
for $(p,q)\in\{(1,0),(3,0),(4,0),(5,0),(4,1),(2,1),(3,2)\}$, and
\[
h^{2,0}(X)+h^{3,1}(X)=h^{2,0}(Y)+h^{3,1}(Y) \quad \textrm{and} \quad
h^{1,1}(X)+h^{2,2}(X)=h^{1,1}(Y)+h^{2,2}(Y).
\]
Moreover, if $K_X\simeq\mathcal O_X$, then $h^{p,q}(X)=h^{p,q}(Y)$ for all $p,q$.
\end{theo*}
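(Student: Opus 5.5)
The plan is to read off each Hodge number of a fivefold from three kinds of derived invariants: the dimensions of the Hochschild diagonals $V_k$, the signatures supplied by Theorem \ref{theo1}, and the Corollary above. Hodge symmetry $h^{p,q}=h^{q,p}$ and Serre duality $h^{p,q}=h^{5-p,5-q}$ will be used throughout to rewrite every entry of a diagonal in terms of the reduced set $h^{1,0},h^{2,0},h^{3,0},h^{4,0},h^{5,0},h^{1,1},h^{2,1},h^{3,1},h^{4,1},h^{2,2},h^{3,2}$. Since $V_k\simeq V_{-k}$, it suffices to treat $k\ge 0$.

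\textbf{Even diagonals.} These only give dimension counts.
\begin{itemize}
\item $\dim V_4=h^{4,0}+h^{5,1}=2h^{4,0}$, since $h^{5,1}=h^{0,4}=h^{4,0}$.
\item $\dim V_2=h^{2,0}+h^{3,1}+h^{4,2}+h^{5,3}=2(h^{2,0}+h^{3,1})$.
\item $\dim V_0=2(1+h^{1,1}+h^{2,2})$.
\end{itemize}
This yields the invariance of $h^{4,0}$, of $h^{2,0}+h^{3,1}$ and of $h^{1,1}+h^{2,2}$.

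\textbf{Odd diagonals.} Here $k\equiv n \pmod 2$ and $5\equiv 1\pmod 4$, so Theorem \ref{theo1} gives the pair (sum over $q$ even, sum over $q$ odd) on each diagonal.
\begin{itemize}
\item For $k=5$, $\dim V_5=h^{5,0}$.
\item For $k=3$, the even part is $h^{3,0}+h^{5,2}=2h^{3,0}$ and the odd part is $h^{4,1}$.
\item For $k=1$, the even part is $h^{1,0}+h^{3,2}+h^{5,4}=2h^{1,0}+h^{3,2}$ and the odd part is $h^{2,1}+h^{4,3}=2h^{2,1}$.
\end{itemize}
Combined with Popa--Schnell's invariance of $h^{1,0}$ \cite{PS11}, these give the invariance of $h^{5,0}$, $h^{3,0}$, $h^{4,1}$, $h^{2,1}$ and $h^{3,2}$. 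The Corollary above independently confirms $h^{3,0}=h^{n-2,0}$, $h^{4,1}=h^{n-1,1}$ and $h^{2,1}=h^{n-3,1}$. This completes the first part of the statement.

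\textbf{Trivial canonical bundle.} Suppose $K_X\simeq\mathcal O_X$. Then $K_Y\simeq\mathcal O_Y$ as well, because the Serre functor is preserved by the equivalence. The Corollary gives the invariance of $h^{2,0}$, and hence of $h^{3,1}$ via the invariant sum $h^{2,0}+h^{3,1}$. The only remaining ambiguity is how the invariant sum $h^{1,1}+h^{2,2}$ splits.

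For this I will use the Libgober--Wood identity \cite{LW90}
\[
\sum_{p,q}(-1)^{p+q}\Bigl(p-\tfrac n2\Bigr)^2h^{p,q}(X)=\tfrac{n}{12}\,c_n(X)+\tfrac16\,c_1(X)c_{n-1}(X).
\]
When $c_1=0$ the right-hand side reduces to $\tfrac{5}{12}e(X)$. The topological Euler characteristic is a derived invariant, being the alternating sum of the dimensions of the $V_k$. On the left-hand side:
\begin{itemize}
\item $h^{1,1}$ enters through $(1,1)$ and $(4,4)$, each with weight $9/4$, so with total coefficient $9/2$;
\item $h^{2,2}$ enters through $(2,2)$ and $(3,3)$, each with weight $1/4$, so with total coefficient $1/2$;
\item every other Hodge number has already been shown to be invariant.
\end{itemize}
So the identity yields the invariance of $9h^{1,1}+h^{2,2}$. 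Together with $h^{1,1}+h^{2,2}$, the two linear forms are independent, which forces equality of $h^{1,1}$ and $h^{2,2}$ separately. Hodge symmetry and Serre duality then give equality of all $h^{p,q}$.

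The main point needing care is this last step: checking the precise normalization of the Libgober--Wood formula, and confirming that the weights it assigns to $h^{1,1}$ and $h^{2,2}$ are genuinely different. Otherwise the argument is bookkeeping with symmetries.
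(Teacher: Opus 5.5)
Your proposal is correct and follows essentially the same route as the paper. It uses HKR dimension counts on $V_0,V_2,V_4$, the signatures on the odd diagonals together with Popa--Schnell, the invariance of $h^{2,0}$ when $K_X\simeq\mathcal O_X$, and the Libgober--Wood identity to split $h^{1,1}+h^{2,2}$.

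The only difference is cosmetic. You keep $\frac{5}{12}e(X)$ on the right-hand side, which is correctly normalized and invariant as $\sum_k(-1)^k\dim V_k$, and obtain $9h^{1,1}+h^{2,2}$. The paper absorbs this term and uses Serre duality and $\chi(\mathcal O_X)=0$ to obtain $11h^{1,1}-h^{2,2}$. The two are equivalent modulo the invariant sum, since $2(11h^{1,1}-h^{2,2})=3(9h^{1,1}+h^{2,2})-5(h^{1,1}+h^{2,2})$.
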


Second, we give a new proof of the following theorem.

\begin{theo*}[Huybrechts-Nieper-Wi\ss kirchen \cite{HNW11}]
Let $X$ and $Y$ be smooth projective complex varieties with $D^b(X)\simeq D^b(Y)$. If $X$ is irreducible holomorphic symplectic, then so is $Y$.
\end{theo*}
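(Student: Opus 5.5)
The plan is to reduce to the Beauville--Bogomolov decomposition. The first step is to show that $Y$ has trivial canonical bundle, $h^{1,0}(Y)=0$ and $h^{2,0}(Y)=1$. The second is to use the algebra structure of Hochschild cohomology to prove that a generator $\sigma_Y$ of $H^0(Y,\Omega^2_Y)$ is non-degenerate. The third is to exclude a non-trivial fundamental group.

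\emph{Step 1.} Since $K_X\simeq\mathcal O_X$, the Serre functor of $D^b(X)$ is the shift $[n]$. Serre functors are intrinsic, so the Serre functor of $D^b(Y)$ is $-\otimes\omega_Y[\dim Y]$ and must be isomorphic to $[n]$. Comparing the two (Bondal--Orlov) gives $\dim Y=n$ and $\omega_Y\simeq\mathcal O_Y$. Popa--Schnell then gives $h^{1,0}(Y)=h^{1,0}(X)=0$, and the last assertion of the Corollary above gives $h^{2,0}(Y)=h^{2,0}(X)=1$.

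\emph{Step 2 (main obstacle).} A derived equivalence induces an isomorphism of graded algebras $HH^*(X)\simeq HH^*(Y)$. For trivial canonical bundle, the HKR isomorphism twisted by $\operatorname{td}^{1/2}$ (Kontsevich, Calaque--Van den Bergh) is multiplicative:
\[
HH^*(X)\simeq\bigoplus H^q(X,\wedge^pT_X)\simeq\bigoplus H^q(X,\Omega_X^{n-p}).
\]
For $X$ irreducible holomorphic symplectic, Verbitsky's theorem describes the subalgebra of $HH^*(X)$ generated by $HH^2(X)$: it is $\operatorname{Sym}^*HH^2(X)$ modulo the relations $x^{n/2+1}=0$ for $x$ isotropic with respect to the Beauville--Bogomolov form, with the orthogonal Lie algebra acting on it. I would transport this description to $Y$. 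The key claim is that the subalgebra generated by $HH^2(Y)$ contains no non-zero element of $H^2(\mathcal O_Y)\oplus H^0(\wedge^2T_Y)$ whose $(n/2)$-th power vanishes, except for isotropic ones.

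The difficulty is that the summands $H^2(\mathcal O)$, $H^1(T)$ and $H^0(\wedge^2T)$ of $HH^2$ are not individually preserved by the equivalence. I would get around this using Verbitsky's orthogonal action, which makes every element of a fixed isotropy type equivalent to every other. The element of $HH^2(Y)$ corresponding to $\bar\sigma_Y\in H^2(\mathcal O_Y)$ can then be compared with a class in $HH^2(X)$ whose $(n/2)$-th power is non-zero. Via $\wedge^{n/2}$ this would give $\sigma_Y^{n/2}\neq0$ in $H^0(Y,\Omega^n_Y)\simeq\C$. Hence $\sigma_Y$ is a holomorphic symplectic form on $Y$. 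If the isotropy comparison fails, the fallback is to argue directly: degeneracy of $\sigma_Y$ would produce a non-zero nilpotent element of $H^2(\mathcal O_Y)$ of too small an order, and this order is an algebra invariant.

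\emph{Step 3.} By Beauville--Bogomolov, a finite étale cover $Y'\to Y$ of degree $d$ splits as a product of a complex torus, strict Calabi--Yau factors and irreducible holomorphic symplectic factors. Since $\sigma_Y$ is symplectic, no strict Calabi--Yau factor of dimension at least $3$ can occur. Since $h^{2,0}(Y)=1$, at most one irreducible symplectic factor can contribute invariantly. Both a positive-dimensional torus factor and a product of two or more irreducible symplectic factors are excluded by comparing Euler characteristics of the structure sheaf. First, $\chi(\mathcal O_Y)$ is a derived invariant: with trivial canonical bundle, $\chi(\mathcal O_Y)=\int_Y\operatorname{td}(Y)$ is read off the Mukai pairing, which Căldăraru's adjointness preserves. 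So $\chi(\mathcal O_Y)=\chi(\mathcal O_X)=n/2+1$. On the other hand, $\chi(\mathcal O_{Y'})=d\,\chi(\mathcal O_Y)$. If $Y'$ had a torus factor this would give $\chi(\mathcal O_{Y'})=0$, a contradiction. If $Y'$ were a product of irreducible symplectic factors, the Hodge numbers $h^{2k,0}(Y)\le h^{2k,0}(Y')$ together with $h^{1,0}=0$ would force $d=1$ and a single factor. Hence $Y$ is simply connected with $H^0(\Omega^2_Y)=\C\sigma_Y$ and $\sigma_Y$ symplectic, i.e.\ $Y$ is irreducible holomorphic symplectic.
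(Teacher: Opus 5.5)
Your Steps 1 and 2 follow the paper's route: Serre functor, the Corollary for $h^{2,0}$, multiplicative HKR, and Verbitsky's action. One correction is needed. The algebra identification you need is the one induced by $\sigma_X\colon T_X\simeq\Omega^1_X$, which gives $HT^\bullet(X)\simeq H^\bullet(X,\C)$ with $HT^2(X)\mapsto H^2(X,\C)$ and respects products. Contraction with a volume form, $\wedge^pT_X\simeq\Omega^{n-p}_X$, is not multiplicative, so it would not let you apply Verbitsky's theorem. With that fixed and $\dim X=2m$, your ``fallback'' is exactly the paper's argument:
\begin{itemize}
\item $\eta^{m+1}\in H^{2m+2}(Y,\OO_Y)=0$, so $\alpha=\Theta(\eta)$ satisfies $\alpha^{2m}=0$;
\item Fujiki's relation then makes $\alpha$ isotropic;
\item transitivity on isotropic vectors gives $\alpha^m\neq0$, hence $\sigma_Y^m\neq0$.
\end{itemize}

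\textbf{First gap in Step 3: the torus exclusion.} It rests on $\chi(\OO_Y)=\chi(\OO_X)$, and your justification does not work. $\int_Y\operatorname{td}(Y)$ is the Mukai self-pairing of the class of $\OO_Y$. Nothing forces the cohomological transform to send the class of $\OO_X$, or $1_X$, to that of $\OO_Y$. The isometry only gives $\chi(\Phi\OO_X,\Phi\OO_X)=\chi(\OO_X)$. Derived invariance of $\chi(\OO)$ is not available for $K$-trivial varieties in general. For $K$-trivial sixfolds, given the invariance of $h^{1,0}$ and $h^{2,0}$, it amounts to the invariance of $h^{3,0}$, which the paper leaves open. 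The paper obtains $h^{0,j}(Y)=h^{0,j}(X)$ for all $j$, hence $\chi(\OO_Y)=m+1$, only \emph{after} Step 2:
\begin{itemize}
\item once both varieties are holomorphic symplectic, multiplicative HKR gives a graded algebra isomorphism $\Psi\colon H^\bullet(Y,\C)\simeq H^\bullet(X,\C)$;
\item $h^{0,j}(Y)$ is the rank of $\sigma_Y^m\cup-$ on $H^j(Y,\C)$;
\item $\Psi(\sigma_Y)$ is a non-zero isotropic class, so Lemma~\ref{lemisotropicranks} identifies this rank with $h^{0,j}(X)$.
\end{itemize}

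\textbf{Second gap in Step 3: the deck group.} Your claim that $h^{2k,0}(Y)\le h^{2k,0}(Y')$ together with $h^{1,0}=0$ forces $d=1$ and a single factor is not an argument. The inequality holds for every finite \'etale cover. The vanishing $h^{1,0}=0$ holds for every free quotient of a product of irreducible holomorphic symplectic manifolds. And $h^{2,0}(Y)=1$ only says that the deck group acts transitively on the factors. What is missing is a mechanism ruling out free actions on $\widetilde Y=\prod_iM_i$ that preserve $\pi^*\sigma_Y=\sum_i\tau_i$. The paper supplies it:
\begin{itemize}
\item by uniqueness of the Beauville--Bogomolov decomposition, every deck transformation permutes the factors;
\item the composite along a cycle is a symplectic automorphism of some $M_i$ of dimension $2m'$, whose holomorphic Lefschetz number is $m'+1\neq0$;
\item the Atiyah--Bott theorem then yields a fixed point, contradicting freeness.
\end{itemize}
Alternatively, once you know the deck group permutes the factors and preserves $\sum_i\tau_i$, it permutes the monomial basis of $H^\bullet(\widetilde Y,\OO_{\widetilde Y})$ in the $\overline\tau_i$. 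All traces are then positive, and averaging them is incompatible with $\chi(\OO_{\widetilde Y})=d\,\chi(\OO_Y)$ unless $d=1$.
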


Starting from the invariance of $h^{2,0}$, the proof uses the multiplicative HKR isomorphism to obtain a symplectic form on $Y$. Verbitsky's orthogonal action on the full cohomology ring then identifies the ranks of multiplication by powers of non-zero isotropic classes, giving $h^{0,p}(Y)=h^{0,p}(X)$ for every $p$. Finally, the Beauville-Bogomolov decomposition and the holomorphic Lefschetz theorem prove simple connectivity. Remark~\ref{remHNWgap} explains the additional justification needed at the corresponding step of the original argument.

Section \ref{secsignature} constructs the forms and proves the signature formula. Section \ref{secapplications} contains the applications: Subsection \ref{subsecHodge} establishes the numerical invariance results, including the fivefold theorem, and discusses the remaining numerical ambiguities in dimensions five and six. Subsection \ref{subsecIHS} then gives the new symplectic argument, using the invariance of $h^{2,0}$ established in Subsection \ref{subsecHodge}. \\

\noindent \textbf{Acknowledgements and AI disclosure.} During the preparation of this article, the author used ChatGPT (OpenAI) as an interactive tool for mathematical discussion, verification of computations, exploration of examples and counter-examples, assistance in locating relevant references, and improvement of the exposition and English. The paper was written by the author, who is responsible for its correctness. \\

\section{The Hirzebruch-Hochschild signature theorem for projective varieties}\label{secsignature}

Throughout this section, $X$ is a smooth projective complex variety of dimension $n$. We regard $H^{p,q}(X)$ as a subspace of $H^{p+q}(X,\C)$ and use the usual complex conjugation on singular cohomology. For a possibly degenerate Hermitian form $\mathcal H$, we write $\operatorname{sign}(\mathcal H)=(n_+(\mathcal H),n_-(\mathcal H))$ for the ordered pair consisting of its positive and negative indices, and $n_0(\mathcal H)=\dim\operatorname{rad}(\mathcal H)$ for its nullity.

\begin{defi}\label{defHHform}
Let $X$ be a smooth projective variety of dimension $n$. For $-n\leq k\leq n$, put $V_k(X)=\bigoplus_{p-q=k}H^{p,q}(X)$. Define $u^\vee=i^ju$ for $u\in H^j(X,\C)$, extending this operation linearly to $H^\bullet(X,\C)$. The Todd-corrected cohomological pairing is:
\[
\mathcal T_{k,X}(u,v) =\int_Xu^\vee\wedge \overline{v}\wedge\operatorname{td}(X),
\]
For $u,v\in V_k(X)$, define
\[
\mathcal H_{k,X}(u,v)=\int_Xu^\vee\wedge\overline v\wedge \frac{\operatorname{td}(X)+(-1)^{n+k}\operatorname{td}(X)^\vee}{2}.
\]
\end{defi}

\begin{rem}\label{remHKR}
The Hochschild-Kostant-Rosenberg isomorphism identifies
\[
HH_k(X)\simeq\bigoplus_{q-p=k}H^q(X,\OM_X^p)=V_{-k}(X),
\]
so that the spaces $V_k(X)$ are, up to the sign of the grading, the graded pieces of
the Hochschild homology of $X$; we call them the Hochschild diagonals. Note that
$\mathcal H_{k,X}$ depends on no auxiliary choice: the ample class used in the proof of
Theorem \ref{theoHHsignature} below only serves to produce an adapted orthogonal
decomposition of $V_k(X)$.
\end{rem}

The Todd-corrected pairing is not Hermitian; its Hermitian part is what we shall use.
The following proposition computes it, and shows that the answer depends only on the
parity of $n+k$.

\begin{prop}\label{propHHparity}
The form $\mathcal H_{k,X}$ is Hermitian for every $k$, linear in its first variable and conjugate-linear in its second. More precisely,
\[
\mathcal H_{k,X}(u,v)=\frac12\left(\mathcal T_{k,X}(u,v)+\overline{\mathcal T_{k,X}(v,u)}\right).
\]
It is non-degenerate when $k\equiv n\pmod2$. When $k\not\equiv n\pmod2$, its radical is
\[
\operatorname{rad}(\mathcal H_{k,X})=\ker\bigl(c_1(X)\cup-:V_k(X)\longrightarrow V_k(X)\bigr).
\]
In particular, in the latter case it vanishes if $c_1(X)=0$, and it is degenerate whenever $V_k(X)\neq0$.
\end{prop}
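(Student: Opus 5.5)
The plan is to reduce everything to the plain pairing $P_k(u,w)=\int_X u^\vee\wedge\overline w$ on $V_k(X)$. First I record how $(\,\cdot\,)^\vee$ interacts with the other operations.

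\begin{itemize}
\item It is multiplicative: $(ab)^\vee=a^\vee b^\vee$.
\item It almost commutes with conjugation: $\overline{u^\vee}=(-1)^j(\overline u)^\vee$ for $u\in H^j(X,\C)$.
\item It acts on top degree by $(-1)^n$, so $\int_X w=(-1)^n\int_X w^\vee$.
\end{itemize}

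\noindent\emph{Step 1: the Hermitian identity.} Let $u,v\in V_k(X)$. All homogeneous components of $u$ and $v$ have degree $\equiv k\pmod 2$, and $\operatorname{td}(X)$ is a real class. I compute
\[
\overline{\mathcal T_{k,X}(v,u)}=\int_X\overline{v^\vee}\,u\,\operatorname{td}(X)=(-1)^k\int_X(\overline v)^\vee u\,\operatorname{td}(X).
\]
Next I apply $\int_X w=(-1)^n\int_Xw^\vee$, use $(\overline v)^{\vee\vee}=(-1)^k\overline v$, and commute $\overline v$ past $u^\vee$, which costs $(-1)^{k}$. The result should be
\[
\overline{\mathcal T_{k,X}(v,u)}=(-1)^{n+k}\int_Xu^\vee\,\overline v\,\operatorname{td}(X)^\vee .
\]
This gives the displayed formula for $\mathcal H_{k,X}$. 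Hermitian symmetry and sesquilinearity then follow at once.

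This sign bookkeeping is the step I expect to be most error-prone, though not deep. I will check it on a homogeneous $u\in H^{p,q}$ and $v\in H^{p',q'}$ with $p+q+p'+q'\le 2n$ separately.

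\noindent\emph{Step 2: factoring the Todd class.} I write $\operatorname{td}(X)=e^{c_1(X)/2}\hat A(X)$, where $\hat A(X)$ has components only in degrees $\equiv0\pmod 4$. Then $\hat A^\vee=\hat A$ and $(e^{c_1/2})^\vee=e^{-c_1/2}$, since $c_1^\vee=-c_1$. Hence
\[
\tfrac12\bigl(\operatorname{td}+(-1)^{n+k}\operatorname{td}^\vee\bigr)=
\begin{cases}\hat A\cosh(c_1/2), & n+k \text{ even},\\ \hat A\sinh(c_1/2), & n+k\text{ odd}.\end{cases}
\]
Both are real classes of pure Hodge types $(m,m)$. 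Multiplication by such a class preserves each $V_k(X)$.

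\noindent\emph{Step 3: non-degeneracy for $k\equiv n \pmod 2$.} The pairing $P_k$ is perfect on $V_k$. Indeed, $H^{p,q}$ pairs perfectly with $\overline{H^{n-q,n-p}}=H^{n-p,n-q}$ by Serre/Poincaré duality, and $H^{n-q,n-p}\subset V_k$. The class $A=\hat A\cosh(c_1/2)$ is real, so $\mathcal H_{k,X}(u,v)=P_k(u,Av)$. Moreover $A=1+(\text{nilpotent})$ acts invertibly on $V_k$. So $\mathcal H_{k,X}$ is non-degenerate.

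\noindent\emph{Step 4: the radical for $k\not\equiv n\pmod 2$.} I write $\hat A\sinh(c_1/2)=\tfrac12c_1\cdot B$ with $B=\hat A\cdot\frac{\sinh x}{x}\big|_{x=c_1/2}$. The class $B$ is invertible, real, and commutes with $c_1$, so $\mathcal H_{k,X}(u,v)=\tfrac12P_k(u,c_1Bv)$. Since $c_1$ is real of degree $2$, moving it across the pairing gives $P_k(u,c_1w)=-P_k(c_1u,w)$. Thus $u$ lies in the radical iff $P_k(c_1u,Bv)=0$ for all $v$. Since $B$ is bijective and $P_k$ is perfect, this holds iff $c_1u=0$.

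The two final assertions follow from this description. If $c_1(X)=0$, the form is identically zero. If $V_k\neq0$, then $c_1$ acts nilpotently on $V_k$ and so has a non-zero kernel, which makes the form degenerate.
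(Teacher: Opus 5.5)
Your proof is correct and follows essentially the same route as the paper: you verify the symmetry identity by sign bookkeeping, then write the correcting class either as an invertible real class of diagonal Hodge type or as $\tfrac12 c_1(X)$ times such a class, which reduces non-degeneracy and the radical to the perfect uncorrected pairing $\int_X u^\vee\wedge\overline w$. The differences are cosmetic. You factor $\operatorname{td}(X)=e^{c_1(X)/2}\hat A(X)$ to obtain $\hat A\cosh(c_1/2)$ and $\hat A\sinh(c_1/2)$, where the paper writes $\operatorname{td}(X)\cdot\frac{1\pm e^{-c_1(X)}}{2}$. You also compute the radical in the first variable by moving $c_1(X)$ across the pairing with a sign, whereas the paper reads it off directly in the second variable.
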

\begin{proof}
The operation $u\mapsto u^\vee$ is $\C$-linear, whereas complex conjugation is $\C$-antilinear. Since the cup product and integration are $\C$-linear, the forms in Definition~\ref{defHHform} are sesquilinear.

We next prove the symmetry identity. By sesquilinearity, it suffices to consider $u\in H^{p,q}(X)$ and $v\in H^{p',q'}(X)$, where $p-q=p'-q'=k$. Write $\operatorname{td}_j(X)\in H^{j,j}(X)$ for the component of the Todd class of cohomological degree $2j$. The integral of $u\wedge\overline v\wedge\operatorname{td}_j(X)$ vanishes unless
\[
p+q'+j=q+p'+j=n.
\]
When these equalities hold, $p+q+p'+q'=2n-2j$. Integration commutes with complex conjugation, the Todd classes are real, and graded commutativity gives
\[
\begin{aligned}
\overline{i^{p'+q'}\int_Xv\wedge\overline u\wedge\operatorname{td}_j(X)}
&=(-i)^{p'+q'}(-1)^k\int_Xu\wedge\overline v\wedge\operatorname{td}_j(X)\\
&=(-1)^{n+k+j}i^{p+q}\int_Xu\wedge\overline v\wedge\operatorname{td}_j(X).
\end{aligned}
\]
Since $\operatorname{td}_j(X)^\vee=(-1)^j\operatorname{td}_j(X)$, summing over $j$ proves
\[
\overline{\mathcal T_{k,X}(v,u)}=(-1)^{n+k}\int_Xu^\vee\wedge\overline v\wedge\operatorname{td}(X)^\vee.
\]
The formula for $\mathcal H_{k,X}$ follows, and proves that it is Hermitian. Introduce the auxiliary form
\[
\mathcal H^0_{k,X}(u,v)=\int_Xu^\vee\wedge\overline v,\quad \textrm{forall} \ u,v \in V_k(X).
\]
We note that
\[\forall u,v \in V_k(X), \ \mathcal{H}_k(u,v) = \mathcal{H}^0_{k,X}\left(u, v \wedge \frac{\operatorname{td}(X)+(-1)^{n+k}\operatorname{td}(X)^\vee}{2} \right).\]

Let us check that $\mathcal{H}^0_{k,X}$ is non-degenerate : for any $0\neq u\in V_k(X)$, choose a non-zero Hodge component $u^{p,q}$. Poincar\'e duality and the Hodge decomposition imply that
\begin{equation*}
\begin{split}
 & H^{p,q}(X)\times H^{n-p,n-q}(X)\longrightarrow\C, \\
& (\alpha,\beta)\longmapsto\int_X\alpha\wedge\beta
\end{split}
\end{equation*}
is a perfect pairing. Choose $w\in H^{n-p,n-q}(X)$ with $\int_Xu^{p,q}\wedge w\neq0$ and put $v=\overline w\in H^{n-q,n-p}(X)\subset V_k(X)$. Every other Hodge component of $u$ pairs trivially with $w$, hence $\mathcal{H}^0_{k,X}(u,v) \neq0$. As a consequence, $\mathcal{H}_{k,X}^{0}$ is non degenerate. 

The identity $\operatorname{td}(X)^\vee=e^{-c_1(X)}\operatorname{td}(X)$ gives
\[
\frac{\operatorname{td}(X)+(-1)^{n+k}\operatorname{td}(X)^\vee}{2}
=\operatorname{td}(X)\frac{1+(-1)^{n+k}e^{-c_1(X)}}{2}.
\]
If $k\equiv n\pmod2$, the class $\frac{\operatorname{td}(X)+(-1)^{n+k}\operatorname{td}(X)^\vee}{2}$ has constant term $1$ and is therefore invertible. We deduce in that case that $\mathcal{H}_{k,X}$ is non-degenerate. \\

If $k\not\equiv n\pmod2$, the exponential expansion gives
\[
\frac{\operatorname{td}(X)-\operatorname{td}(X)^\vee}{2}
=\frac{c_1(X)}2 \operatorname{td}(X)\sum_{j\geq0}\frac{(-c_1(X))^j}{(j+1)!}.
\]
The class $ \operatorname{td}(X)\sum_{j\geq0}\frac{(-c_1(X))^j}{(j+1)!}$ has constant term $1$, hence is invertible. We deduce that the radical of $\mathcal H_{k,X}$ identifies with
\[
\ker\bigl(c_1(X)\cup-:V_k(X)\longrightarrow V_k(X)\bigr).
\]
In particular, the form vanishes if $c_1(X)=0$, and is degenerate whenever $V_k(X)\neq0$, since multiplication by $c_1(X)$ is nilpotent.

\end{proof}

\begin{theo}[Signature Theorem on Hochschild homology] \label{theoHHsignature}
Let $X$ be a smooth complex projective variety of dimension $n$. For every $k$ with $-n\leq k\leq n$ and $k\equiv n\pmod 2$, one has
\[
\operatorname{sign}(\mathcal H_{k,X})=
\displaystyle\left(\sum_{\substack{p-q=k\\q\ \mathrm{even}}}h^{p,q}(X),\ \sum_{\substack{p-q=k\\q\ \mathrm{odd}}}h^{p,q}(X)\right), \ \textrm{if} \ n\equiv0,1\pmod4,\]
and
\[
\operatorname{sign}(\mathcal H_{k,X})= \displaystyle\left(\sum_{\substack{p-q=k\\q\ \mathrm{odd}}}h^{p,q}(X),\ \sum_{\substack{p-q=k\\ q\ \mathrm{even}}}h^{p,q}(X)\right), \ \textrm{if} \ n\equiv2,3\pmod4.
\]
Equivalently,
\[
n_+(\mathcal H_{k,X})-n_-(\mathcal H_{k,X})=(-1)^{n(n-1)/2}\sum_{p-q=k}(-1)^qh^{p,q}(X).
\]
\end{theo}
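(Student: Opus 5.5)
The plan is to reduce $\mathcal H_{k,X}$ to the uncorrected form $\mathcal H^0_{k,X}(u,v)=\int_X u^\vee\wedge\overline v$ from the proof of Proposition~\ref{propHHparity}, and then to compute the signature of $\mathcal H^0_{k,X}$ with the Lefschetz decomposition and the Hodge--Riemann bilinear relations. Put
\[
T=\frac{\operatorname{td}(X)+\operatorname{td}(X)^\vee}{2},
\]
which is the correcting class when $k\equiv n\pmod 2$. Since $\operatorname{td}_j(X)^\vee=(-1)^j\operatorname{td}_j(X)$, the class $T$ only has components in $H^{j,j}(X)$ with $j$ even. It is real and has constant term $1$.

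Step 1 is the reduction. Let $R=\sqrt{T}$, defined by the binomial power series, which is a finite sum because $T-1$ is nilpotent. Then $R$ is real, lies in $\bigoplus_{j\ \mathrm{even}}H^{j,j}(X)$, satisfies $R^\vee=R$ and $R^2=T$, and is invertible. Cup product with $R$ preserves $p-q$, so it is an automorphism of $V_k(X)$. Since $(u\wedge R)^\vee=u^\vee\wedge R^\vee$ by multiplicativity of $\vee$, and $\overline{R}=R$, we get
\[
\mathcal H^0_{k,X}(uR,vR)=\int_X u^\vee\wedge R\wedge\overline v\wedge R=\mathcal H_{k,X}(u,v).
\]
Hence multiplication by $R$ is an isometry $(V_k,\mathcal H_{k,X})\simeq(V_k,\mathcal H^0_{k,X})$, and it suffices to compute $\operatorname{sign}(\mathcal H^0_{k,X})$.

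Step 2 computes the off-middle blocks. The form $\mathcal H^0_{k,X}$ pairs $H^{p,q}$ only with $H^{n-q,n-p}$, and both lie in $V_k$. Grouping $V_k$ into the pairs $\{(p,q),(n-q,n-p)\}$ gives an orthogonal decomposition. When $p+q\neq n$ the two members of a pair differ, each is isotropic, and they are in perfect duality. The block is therefore hyperbolic, of signature $(h^{p,q},h^{p,q})$. Because $q+(n-p)=n-k$ is even, $q$ and $n-p$ have the same parity, so this block contributes $h^{p,q}$ to the positive index and $h^{p,q}$ to the negative index. That agrees with the claimed formula whichever case of $n\bmod 4$ we are in.

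Step 3 treats the middle term $p+q=n$, that is $p=(n+k)/2$, which exists because $n\equiv k\pmod 2$. Choose an ample class $L$ and write $H^{p,q}=\bigoplus_r L^r P^{p-r,q-r}$. Distinct $r$ are $\mathcal H^0$-orthogonal, by the usual degree argument on $L^{r+r'}\alpha\wedge\overline\beta$ with $\alpha,\beta$ primitive. On $L^rP^{p',q'}$, with $m=n-2r$, we have
\[
\mathcal H^0(L^r\alpha,L^r\alpha)=i^n\int_X L^{2r}\alpha\wedge\overline\alpha.
\]
The Hodge--Riemann relation states that $i^{p'-q'}(-1)^{m(m-1)/2}\int_X L^{n-m}\alpha\wedge\overline\alpha>0$. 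Combining the two, the block $L^rP^{p',q'}$ is definite with sign $i^{n-(p'-q')}(-1)^{m(m-1)/2}$. Using $p'-q'=k$ and $n=m+2r$, I expect this sign to simplify to $(-1)^{n(n-1)/2}(-1)^q$, independently of $r$. Summing over $r$ then gives the sign $(-1)^{n(n-1)/2}(-1)^q$ on all of $H^{p,q}$.

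Combining the three steps gives
\[
n_+-n_-=(-1)^{n(n-1)/2}\sum_{p-q=k}(-1)^q h^{p,q}(X),
\]
with the hyperbolic blocks cancelling in pairs of equal parity. The two explicit forms in the statement are equivalent to this, since $(-1)^{n(n-1)/2}=1$ exactly when $n\equiv0,1\pmod4$. The main obstacle is the sign bookkeeping in Step 3, namely checking that $i^{n-k}(-1)^{m(m-1)/2}$ equals $(-1)^{n(n-1)/2+q}$ for every $r$. I would verify it by reducing exponents modulo $4$, using $n-k=2q$, after writing $m(m-1)/2$ in terms of $n$ and $r$.
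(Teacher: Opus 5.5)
There is a genuine gap, exactly at the sign bookkeeping you flagged, and it breaks your block-by-block matching. Your Step~1 is the paper's reduction and is correct. The hyperbolic computation in Step~2 is also correct.

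\textbf{Step~3 is wrong.} The sign there depends on $r$. With $m=n-2r$ one gets $\frac{n(n-1)}2-\frac{m(m-1)}2=2nr-2r^2-r\equiv r\pmod 2$, and $i^{n-k}=(-1)^q$. So $\mathcal H^0_{k,X}$ has sign $(-1)^{n(n-1)/2+q-r}$ on $L^rP^{p-r,q-r}(X)$. What matters is the parity of the second index of the \emph{primitive} bidegree, and the middle summand $H^{p,q}(X)$ with $p+q=n$ is in general indefinite. For a surface and $k=0$, $\mathcal H^0_{0,X}(u,u)=-\int_Xu\wedge\overline u$ on $H^{1,1}(X)$ is negative on the ample class and positive on primitive classes. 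It therefore has signature $(h^{1,1}-1,1)$, not $(h^{1,1},0)$.

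\textbf{Step~2's matching is a non sequitur.} Since $q$ and $n-p$ have the same parity, the target formula places $H^{p,q}(X)$ and $H^{n-q,n-p}(X)$ on the \emph{same} side, contributing $2h^{p,q}$ to one index. Likewise $(-1)^qh^{p,q}+(-1)^{n-p}h^{n-q,n-p}=2(-1)^qh^{p,q}$ does not cancel. Taking both of your claims as stated, you would get $(h^{1,1}+1,1)$ for the surface instead of the correct $(h^{1,1},2)$.

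\textbf{How to repair your route.} Your decomposition can be salvaged, but only by a global count.
The hyperbolic blocks contribute $0$ to $n_+-n_-$.
Every primitive bidegree of $V_k(X)$ occurs exactly once in the middle degree. So, with the corrected sign, the middle block contributes $(-1)^{n(n-1)/2}\sum_{p'-q'=k,\;p'+q'\le n}(-1)^{q'}\dim P^{p',q'}(X)$.
You then need the identity $\sum_{a-b=k}(-1)^bh^{a,b}(X)=\sum_{p'-q'=k,\;p'+q'\le n}(-1)^{q'}\dim P^{p',q'}(X)$. It follows from the Lefschetz decomposition, because each chain has odd length $n-p'-q'+1$ with alternating signs.
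Finally, non-degeneracy ($n_++n_-=\dim V_k(X)$) recovers both indices from $n_+-n_-$.
This is the splitting used in the paper's remark recovering Hirzebruch's formula, run on one diagonal.

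\textbf{What the paper does instead.} It splits all of $V_k(X)$ into mutually orthogonal full Lefschetz chains $\langle\alpha,L\alpha,\dots,L^{2m}\alpha\rangle$. Each chain is an orthogonal sum of $m$ hyperbolic planes $\langle L^r\alpha,L^{2m-r}\alpha\rangle$ and a central line of sign $(-1)^{q+n(n-1)/2}$. Counting even and odd $q+r$ along each chain then gives the two indices directly, with no cancellation argument needed.
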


\begin{proof}
Since $k\equiv n\pmod2$, put
\[
A_X=\sqrt{\frac{\operatorname{td}(X)+\operatorname{td}(X)^\vee}{2}},
\]
where the square root has constant term $1$. Cohomological duality acts on $H^{2j}(X,\C)$ by $(-1)^j$, so the average inside the square root has components only in degrees divisible by $4$. Its square root, obtained by the binomial expansion, has the same property, since products preserve this divisibility. Thus $A_X$ is real and invertible, and $A_X^\vee=A_X$. Moreover, its components have diagonal Hodge type, so multiplication by $A_X$ preserves each $V_k(X)$.

Since $A_X^\vee=A_X=\overline{A_X}$, we have
\[
\mathcal H_{k,X}(u,v)=\mathcal H^0_{k,X}(A_Xu,A_Xv).
\]
Multiplication by $A_X$ therefore identifies the two forms, so it is enough to compute the signature of $\mathcal H^0_{k,X}$. \\

Let $h$ be an ample class and put $L=h\cup-$. Since $h$ has Hodge type $(1,1)$, the operator $L$ maps $H^{p,q}(X)$ to $H^{p+1,q+1}(X)$. For $p+q\leq n$, denote by
\[
P^{p,q}(X)=\ker\bigl(L^{n-p-q+1}:H^{p,q}(X)\longrightarrow H^{n-q+1,n-p+1}(X)\bigr)
\]
the primitive cohomology. The Lefschetz decomposition respects the Hodge decomposition and gives, for every $(a,b)$,
\[
H^{a,b}(X)=\bigoplus_{r=\max(0,a+b-n)}^{\min(a,b)}L^rP^{a-r,b-r}(X).
\]
Summing over $a-b=k$ and setting $p=a-r$, $q=b-r$, we obtain $p-q=k$. The bounds on $r$ are equivalent to $p,q\geq0$, $p+q\leq n$ and $0\leq r\leq n-p-q$. Thus, regrouping the summands according to their primitive bidegrees yields
\[
V_k(X)=\bigoplus_{\substack{p-q=k\\p+q\leq n}}\ \bigoplus_{r=0}^{n-p-q}L^rP^{p,q}(X).
\]
In this expression, each primitive space $P^{p,q}(X)$ contributes its entire Lefschetz chain, whose terms have bidegrees $(p+r,q+r)$ and therefore all belong to $V_k(X)$.\\

For each primitive bidegree $(p,q)$ occurring in this decomposition, the Hodge-Riemann bilinear relations assert that the Hermitian form
\[
Q_{p,q}(\alpha,\beta)=(-1)^{(p+q)(p+q-1)/2}i^{p-q}\int_Xh^{n-p-q}\wedge\alpha\wedge\overline\beta
\]
is positive definite on $P^{p,q}(X)$. Choose a $Q_{p,q}$-orthonormal basis $(\alpha_{p,q,j})_{1\leq j\leq\dim P^{p,q}(X)}$ and consider the subspaces generated by the corresponding Lefschetz chains:
\[
W_{p,q,j}=\left\langle\alpha_{p,q,j},L\alpha_{p,q,j},\ldots,L^{n-p-q}\alpha_{p,q,j}\right\rangle.
\]
Hard Lefschetz implies that these generators are non-zero; since they have distinct total degrees, they are linearly independent. Thus
\[
V_k(X)=\bigoplus_{\substack{p-q=k\\p+q\leq n}}\ \bigoplus_jW_{p,q,j}.
\]

We first check that this decomposition is orthogonal for $\mathcal H^0_{k,X}$. Let $\alpha\in P^{p,q}(X)$ and $\beta\in P^{p',q'}(X)$, with $p-q=p'-q'=k$. By bidegree considerations, $\mathcal H^0_{k,X}(L^r\alpha,L^s\beta)$ vanishes unless
\[
r+s=n-\frac{p+q+p'+q'}2.
\]
When this equality holds,
\[
\mathcal H^0_{k,X}(L^r\alpha,L^s\beta)=i^{p+q+2r}\int_Xh^{r+s}\wedge\alpha\wedge\overline\beta.
\]
If $p'+q'>p+q$, the difference of these total degrees is at least $2$, so
\[
r+s=n-p'-q'+\frac{p'+q'-p-q}{2}\geq n-p'-q'+1.
\]
Consequently $L^{r+s}\overline\beta=0$ by primitivity, and the pairing vanishes. The case $p+q>p'+q'$ follows by Hermitian symmetry. Finally, if the total degrees agree, then $(p,q)=(p',q')$, and the only potentially non-zero pairings are scalar multiples of $Q_{p,q}(\alpha,\beta)$. The orthogonality of the chosen basis therefore proves that distinct subspaces $W_{p,q,j}$ are mutually orthogonal.\\

We now compute the signature on one such subspace. Fix $(p,q,j)$ and write
\[
\alpha=\alpha_{p,q,j},\qquad d=p+q,\qquad m=\frac{n-p-q}{2}.
\]
Here $m$ is a non-negative integer because $d\equiv p-q=k\equiv n\pmod2$. Since $Q_{p,q}(\alpha,\alpha)=1$, we have
\[
\mathcal H^0_{k,X}(L^r\alpha,L^s\alpha)=
\begin{cases}
(-1)^{q+r+d(d-1)/2},&r+s=2m,\\
0,&r+s\neq2m.
\end{cases}
\]
Indeed, in the first case the pairing is $i^{d+2r}\int_Xh^{n-d}\wedge\alpha\wedge\overline\alpha$, and division by the coefficient defining $Q_{p,q}$ gives
\[
\frac{i^{d+2r}}{(-1)^{d(d-1)/2}i^{p-q}}=(-1)^{q+r+d(d-1)/2}.
\]
For $0\leq r<m$, put
\[
U_r=\langle L^r\alpha,L^{2m-r}\alpha\rangle.
\]
Both diagonal entries vanish. The off-diagonal entries are $(-1)^{q+r+d(d-1)/2}$. Its matrix in the displayed basis is therefore
\[
\begin{pmatrix}
0&(-1)^{q+r+d(d-1)/2}\\
(-1)^{q+r+d(d-1)/2}&0
\end{pmatrix},
\]
which has eigenvalues $1$ and $-1$. Thus each $U_r$ is a hyperbolic plane of signature $(1,1)$. The vanishing condition $r+s=2m$ also shows that these planes and the central line $\langle L^m\alpha\rangle$ are pairwise orthogonal. Hence
\[
W_{p,q,j}=\left(\bigoplus_{r=0}^{m-1}U_r\right)\oplus\langle L^m\alpha\rangle
\]
is an orthogonal decomposition, with the first sum understood to be zero when $m=0$.\\

On the central line, the preceding formula gives
\[
\mathcal H^0_{k,X}(L^m\alpha,L^m\alpha)=(-1)^{q+m+d(d-1)/2}=(-1)^{q+n(n-1)/2}.
\]
The last equality follows from $n=d+2m$, since
\[
\frac{n(n-1)}2-\frac{d(d-1)}2=m(2d+2m-1)\equiv m\pmod2.
\]
Consequently,
\[
\operatorname{sign}\bigl(\mathcal H^0_{k,X}|_{W_{p,q,j}}\bigr)=
\begin{cases}
(m+1,m),& \textrm{if} \ q+n(n-1)/2\ \mathrm{even},\\
(m,m+1),& \ \textrm{otherwise}.
\end{cases}
\]
\smallskip

We now express the resulting signature in terms of Hodge numbers. Fix a primitive bidegree $(p,q)$ : there are exactly $\dim P^{p,q}(X)$ subspaces $W_{p,q,j}$ of this primitive type. 

Among the integers $q,q+1,\ldots,q+2m$, the numbers of even and odd integers are respectively $(m+1,m)$ if $q$ is even, and $(m,m+1)$ if $q$ is odd. Comparing with the signature computed above, we see that, when $n\equiv0,1\pmod4$, each $W_{p,q,j}$ contributes one positive index for each even integer $q+r$ and one negative index for each odd integer $q+r$, with $0\leq r\leq n-p-q$.

Since the $W_{p,q,j}$ are mutually orthogonal, their indices add, and therefore
\[
n_+(\mathcal H^0_{k,X})=
\sum_{\substack{p-q=k\\p+q\leq n}}\ \sum_{\substack{0\leq r\leq n-p-q\\q+r\ \mathrm{even}}}\dim P^{p,q}(X).
\]
To evaluate this sum, regroup its terms according to the final bidegree $(a,b)=(p+r,q+r)$. The Lefschetz decomposition recalled at the beginning of the proof gives
\[
h^{a,b}(X)=\sum_{r=\max(0,a+b-n)}^{\min(a,b)}\dim P^{a-r,b-r}(X),
\]
because $L^r$ is injective on each primitive space occurring in that decomposition. Consequently,
\[
\begin{aligned}
n_+(\mathcal H^0_{k,X})
&=\sum_{\substack{a-b=k\\b\ \mathrm{even}}}\ \sum_{r=\max(0,a+b-n)}^{\min(a,b)}\dim P^{a-r,b-r}(X)\\
&=\sum_{\substack{a-b=k\\b\ \mathrm{even}}}h^{a,b}(X).
\end{aligned}
\]
The same argument, with odd second Hodge index, gives
\[
n_-(\mathcal H^0_{k,X})=\sum_{\substack{a-b=k\\b\ \mathrm{odd}}}h^{a,b}(X).
\]
Finally, when $n\equiv2,3\pmod4$, the positive and negative indices on every chain are interchanged, so the two resulting sums are interchanged as well. The congruence above proves the asserted signature formula for $\mathcal H_{k,X}$ when $k\equiv n\pmod2$.
\end{proof}

The opposite parity is not vacuous, even though the form is always degenerate there.
The same change of variables reduces it to cup product with $c_1(X)$, whose indices
we now determine.

\begin{prop}\label{propHHopposite}
Let $X$ be a smooth complex projective variety of dimension $n$, and let $-n\leq k\leq n$ with $k\not\equiv n\pmod2$. The positive and negative indices of $\mathcal H_{k,X}$ are obtained by adding
\[
\sum_{\substack{p-q=k\\p+q<n-1}}
\operatorname{rk}\bigl(c_1(X)\cup-:H^{p,q}(X)\longrightarrow H^{p+1,q+1}(X)\bigr)
\]
to the corresponding indices of the Hermitian form
\[
(u,v)\longmapsto\frac{i^{n-1}}2\int_Xc_1(X)\wedge u\wedge\overline v
\quad\textrm{on}\quad H^{(n-1+k)/2,(n-1-k)/2}(X).
\]
Moreover,
\[
n_0(\mathcal H_{k,X})=\dim\ker\bigl(c_1(X)\cup-:V_k(X)\longrightarrow V_k(X)\bigr).
\]
In particular, if $c_1(X)=0$, the form vanishes.
\end{prop}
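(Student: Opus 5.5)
The plan is to reduce $\mathcal H_{k,X}$, by an invertible change of variables as in the proof of Theorem~\ref{theoHHsignature}, to the uncorrected form
\[
G_k(u,v)=\frac12\int_X u^\vee\wedge\overline v\wedge c_1(X),
\]
and then to split $G_k$ block by block according to Hodge bidegree.

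\textbf{Step 1: the change of variables.} From the proof of Proposition~\ref{propHHparity}, when $k\not\equiv n\pmod 2$ we have
\[
\frac{\operatorname{td}(X)-\operatorname{td}(X)^\vee}{2}=\frac{c_1(X)}{2}\,C,\qquad C=\operatorname{td}(X)\,\frac{1-e^{-c_1(X)}}{c_1(X)}.
\]
I will show that $C$ is self-dual. Duality is a ring automorphism with $c_1^\vee=-c_1$, and $\operatorname{td}(X)^\vee=e^{-c_1}\operatorname{td}(X)$. Hence
\[
C^\vee=e^{-c_1}\operatorname{td}(X)\,\frac{e^{c_1}-1}{c_1}=C.
\]
Let $A$ be the square root of $C$ with constant term $1$. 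Then $A$ is real, invertible, of diagonal Hodge type, and $A^\vee=A$, since the binomial series commutes with the ring automorphism $\vee$. Using $(Au)^\vee=A^\vee u^\vee=Au^\vee$ and $\overline{Av}=A\overline v$, we get $\mathcal H_{k,X}(u,v)=G_k(Au,Av)$. Since multiplication by $A$ is an automorphism of $V_k(X)$, the forms $\mathcal H_{k,X}$ and $G_k$ have the same indices. The nullity statement then follows from Proposition~\ref{propHHparity}, and so does the vanishing when $c_1(X)=0$.

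\textbf{Step 2: bidegree decomposition of $G_k$.} For $u\in H^{p,q}(X)$ and $v\in H^{p',q'}(X)$ in $V_k(X)$, the integrand has the right bidegree only when $p+q'+1=q+p'+1=n$. Thus $H^{p,q}$ pairs nontrivially only with $H^{n-1-q,\,n-1-p}$, which lies in total degree $2n-2-(p+q)$. So $V_k(X)$ splits $G_k$-orthogonally into two kinds of pieces.
\begin{itemize}
\item For $p+q<n-1$, the two-block pieces $H^{p,q}\oplus H^{n-1-q,n-1-p}$. Each unordered pair is counted once by the lower bidegree.
\item For $p+q=n-1$, forced by parity, the single middle space $H^{(n-1+k)/2,(n-1-k)/2}$. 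On it $u^\vee=i^{n-1}u$, so $G_k$ is exactly the displayed form.
\end{itemize}

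\textbf{Step 3: indices of a two-block piece.} On such a piece the diagonal blocks vanish, so the Gram matrix has the form $\begin{pmatrix}0&M\\M^*&0\end{pmatrix}$. A Hermitian form of this shape has indices $(\operatorname{rk}M,\operatorname{rk}M)$: take a basis adapted to the singular value decomposition of $M$, which splits the piece into hyperbolic planes plus a radical.

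It remains to identify $\operatorname{rk}M$. Up to a nonzero scalar, $M$ is the matrix of $(u,w)\mapsto\int_X c_1\wedge u\wedge\overline w$ on $H^{p,q}\times H^{n-1-q,n-1-p}$. Since $\overline w$ ranges over $H^{n-1-p,n-1-q}$, Poincar\'e duality between $H^{p+1,q+1}$ and $H^{n-1-p,n-1-q}$ shows that the rank of $M$ equals $\operatorname{rk}\bigl(c_1\cup-:H^{p,q}\to H^{p+1,q+1}\bigr)$. Summing over $p+q<n-1$ and adding the middle contribution gives the statement.

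I expect the main obstacle to be Step 1: getting a self-dual invertible factor $C$ so that the square-root trick works exactly. The remaining risk is sign bookkeeping, namely checking that the factor $i^{\deg}$ from $u^\vee$ is harmless on the off-diagonal blocks and equals $i^{n-1}$ on the middle block.
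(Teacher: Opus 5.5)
Your argument is correct and is essentially the paper's proof. It uses the same self-dual square root of $\operatorname{td}(X)\,(1-e^{-c_1(X)})/c_1(X)$ to reduce to $\frac12\int_X c_1(X)\wedge u^\vee\wedge\overline v$, the same pairing of $H^{p,q}(X)$ with $H^{n-1-q,n-1-p}(X)$, and the same rank count for the off-diagonal blocks.

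The only slip is that your two kinds of pieces do not exhaust $V_k(X)$. Summands with $p=n$ or $q=n$ have no complementary bidegree; for instance $H^{n,n}(X)$ when $k=0$. As the paper notes, these summands lie in the radical, so they do not change the positive and negative indices.
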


\begin{proof}
Since $k\not\equiv n\pmod2$, we have $(-1)^{n+k}=-1$, so the definition of our Hermitian form becomes
\[
\mathcal H_{k,X}(u,v)
=\int_Xu^\vee\wedge\overline v\wedge
\frac{\operatorname{td}(X)-\operatorname{td}(X)^\vee}{2}.
\]
Put
\[
B_X=\sqrt{\operatorname{td}(X)\sum_{j\geq0}\frac{(-c_1(X))^j}{(j+1)!}},
\]
where the sum is finite in cohomology and the square root has constant term $1$. Under cohomological duality, the sum is multiplied by $e^{c_1(X)}$, whereas the Todd class is multiplied by $e^{-c_1(X)}$. The expression under the square root is therefore fixed by duality, and so is its square root. Thus $B_X^\vee=B_X=\overline{B_X}$.

Using $\operatorname{td}(X)^\vee=e^{-c_1(X)}\operatorname{td}(X)$, the exponential expansion gives
\[
\frac{c_1(X)}2 B_X^2
=\frac{1-e^{-c_1(X)}}2\operatorname{td}(X)
=\frac{\operatorname{td}(X)-\operatorname{td}(X)^\vee}{2}.
\]
This is exactly the correcting class in $\mathcal H_{k,X}$ for the parity under consideration. Consequently,
\[
\begin{aligned}
\mathcal H_{k,X}(u,v)
&=\frac12\int_Xc_1(X)\wedge u^\vee\wedge\overline v\wedge B_X^2\\
&=\frac12\int_Xc_1(X)\wedge(B_Xu)^\vee\wedge\overline{B_Xv},
\end{aligned}
\]
where the second equality uses $B_X^\vee=B_X=\overline{B_X}$. Since $B_X$ has constant term $1$ and diagonal Hodge type, multiplication by $B_X$ is an automorphism of $V_k(X)$. Thus $\mathcal H_{k,X}$ has the same signature as the Hermitian form
\[
(u,v)\longmapsto\frac12\int_Xc_1(X)\wedge u^\vee\wedge\overline v.
\]

By bidegree considerations, this form pairs $H^{p,q}(X)$ only with $H^{n-1-q,n-1-p}(X)$. When $p+q<n-1$, these are distinct summands, and the matrix on their direct sum has the form
\[
\begin{pmatrix}0&M\\ M^*&0\end{pmatrix}.
\]
By Poincar\'e duality, the rank of $M$ equals the rank of
\[
c_1(X)\cup-:H^{p,q}(X)\longrightarrow H^{p+1,q+1}(X),
\]
since the target pairs perfectly with the complex conjugate of the complementary summand. A matrix of the displayed form has $\operatorname{rk}(M)$ positive and $\operatorname{rk}(M)$ negative eigenvalues, its remaining eigenvalues being zero.

These pairs of summands are mutually orthogonal. The only possible self-paired summand has total degree $n-1$ and is therefore $H^{(n-1+k)/2,(n-1-k)/2}(X)$. On this summand, $u^\vee=i^{n-1}u$, so the restriction is precisely the form appearing in the statement. Any summand with no complementary bidegree lies in the radical. Adding the contributions proves the formulas for the two indices, while the nullity follows from Proposition \ref{propHHparity}.
\end{proof}

\begin{rem} \label{remHirzebruch}
Suppose that $n$ is even. Hirzebruch's signature formula (see \cite[Theorem 15.8.2]{Hir66}) states that the difference between the positive and negative indices of the real symmetric intersection form
\begin{equation*}
\begin{split}
H^n(X,\R)\times H^n(X,\R) & \longrightarrow\R \\
(u,v)& \longmapsto\int_Xu\wedge v
\end{split}
\end{equation*}
is equal to
\[
\chi_1(X)=\sum_{p=0}^n\chi(X,\Omega_X^p)=\sum_{p,q}(-1)^qh^{p,q}(X).
\]
Our theorem refines this formula by determining the positive and negative indices separately on each Hochschild diagonal.

Indeed, the congruence in the proof of Theorem \ref{theoHHsignature} preserves every diagonal and identifies its signature with that of the auxiliary form. Consider this auxiliary Hermitian form $(u,v)\mapsto\int_Xu^\vee\wedge\overline v$ on $H^{\mathrm{even}}(X,\C)$. Its decomposition into the subspaces $V_k(X)$ with even $k$ is orthogonal. Furthermore, grouping complementary total degrees, each summand $H^j(X,\C)\oplus H^{2n-j}(X,\C)$ with even $j<n$ is hyperbolic and contributes zero to the difference of the indices. On the remaining summand $H^n(X,\C)$, the form is
\[
(u,v)\longmapsto(-1)^{n/2}\int_Xu\wedge\overline v.
\]
Thus the difference between the positive and negative indices of the real intersection form is
\[
(-1)^{n/2}\sum_{\substack{-n\leq k\leq n\\k\ \mathrm{even}}}\bigl(n_+(\mathcal H_{k,X})-n_-(\mathcal H_{k,X})\bigr).
\]
By Theorem~\ref{theoHHsignature}, since $n(n-1)/2\equiv n/2\pmod2$, this equals
\[
\sum_{\substack{p,q\\p-q\ \mathrm{even}}}(-1)^qh^{p,q}(X)=\sum_{p,q}(-1)^qh^{p,q}(X)=\chi_1(X).
\]
The omitted terms with $p-q$ odd cancel in pairs by Hodge symmetry, because $p$ and $q$ then have opposite parity. This recovers Hirzebruch's signature formula.

The classical formula therefore records a single signed sum of the differences of our indices, whereas Theorem \ref{theoHHsignature} retains the individual signatures on each diagonal. Moreover, our theorem also applies in odd complex dimension, where it gives Hermitian signatures on the odd Hochschild diagonals, although the real middle-dimensional intersection form is alternating.
\end{rem}

\section{Applications to derived invariance}\label{secapplications}

We now apply the signature theorem to derived invariance, first obtaining numerical consequences for Hodge numbers and then giving a new proof of the Huybrechts-Nieper-Wi\ss kirchen theorem.

\subsection{Signature invariants and Hodge numbers}\label{subsecHodge}
We first prove that derived equivalences preserve the Hermitian forms constructed above, and then use their signatures to establish the derived invariance of individual Hodge numbers.

\begin{theo}\label{theoHHinvariance}
Let $X$ and $Y$ be smooth projective complex $n$-folds with equivalent bounded derived categories. For every $k$, the Hermitian spaces
\[
\bigl(V_k(X),\mathcal H_{k,X}\bigr)
\quad\textrm{and}\quad
\bigl(V_k(Y),\mathcal H_{k,Y}\bigr)
\]
are isometric. In particular, the positive and negative indices and the nullity of $\mathcal H_{k,X}$ are derived invariants.
\end{theo}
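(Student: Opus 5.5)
Let $\Phi:D^b(X)\to D^b(Y)$ be the equivalence. By Orlov's theorem, $\Phi$ is a Fourier--Mukai transform with kernel $\mathcal E\in D^b(X\times Y)$. I will use the cohomological transform in Căldăraru's normalization. Put
\[
v(\mathcal E)=\operatorname{ch}(\mathcal E)\sqrt{\operatorname{td}(X\times Y)}
\]
and define
\[
\Phi^H(u)=\pi_{Y*}\bigl(\pi_X^*u\cdot v(\mathcal E)\bigr),
\]
which is an isomorphism $H^\bullet(X,\C)\to H^\bullet(Y,\C)$. It is compatible with the HKR isomorphism twisted by $\sqrt{\operatorname{td}}$, so it restricts to isomorphisms $V_k(X)\simeq V_k(Y)$ for every $k$. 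This is the input from Căldăraru already cited in the introduction, and I take it as given.

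\textbf{Step 1: the Mukai pairing is preserved.} The key tool is the Mukai pairing
\[
\langle u,w\rangle_X=\int_X u^\vee\wedge w\wedge e^{c_1(X)/2}.
\]
Using $\operatorname{td}(X)^\vee=e^{-c_1(X)}\operatorname{td}(X)$, this can be rewritten as
\[
\langle u,w\rangle_X=\int_X \bigl(u\sqrt{\operatorname{td}(X)}\bigr)^\vee\wedge w\sqrt{\operatorname{td}(X)},
\]
possibly up to a global sign convention for ${}^\vee$ (Căldăraru uses $(-1)^j$ in degree $j$ rather than $i^j$). Căldăraru's adjointness theorem states that $\Phi^H$ is an isometry for $\langle\,,\rangle$, because the right adjoint kernel gives the inverse transform and adjunction is compatible with Mukai pairings. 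I will first check that the $i^j$ convention differs from $(-1)^j$ only by the factor $i^{\deg}$ on each diagonal piece. Since the degree is fixed modulo the pairing constraints, this changes $\langle\,,\rangle$ by a uniform constant depending on $n$ and $k$, so isometry is unaffected.

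\textbf{Step 2: express $\mathcal T_{k,X}$ through the Mukai pairing.} For $u,v\in V_k(X)$, I want to write $\mathcal T_{k,X}(u,v)=c\,\langle u',\overline{v'}\rangle$, where $u'$ and $v'$ are the images of $u$ and $v$ under a fixed twist. The twist is chosen so that the $\operatorname{td}$ factors combine: $\sqrt{\operatorname{td}}\cdot\overline{\sqrt{\operatorname{td}}}=\operatorname{td}$, since $\operatorname{td}$ is real. Then $\int u^\vee\,\overline v\,\operatorname{td}$ is exactly the Mukai pairing of $u$ with $\overline v$, up to the $e^{c_1/2}$ normalization built into the twisted Mukai vector.

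\textbf{Step 3: commute $\Phi^H$ with complex conjugation.} Here $\mathcal E$ has real Chern character and $\operatorname{td}$ is real, so $\Phi^H(\overline v)=\overline{\Phi^H(v)}$. It follows that $\mathcal T_{k,Y}(\Phi^H u,\Phi^H v)=\mathcal T_{k,X}(u,v)$. Proposition \ref{propHHparity} expresses $\mathcal H_{k,X}$ as the Hermitian part of $\mathcal T_{k,X}$, so it is preserved as well. Since $\dim X=\dim Y$, the sign $(-1)^{n+k}$ is the same on both sides. The indices and nullity are then invariant because $\Phi^H$ is a linear isomorphism intertwining the forms.

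The main obstacle is Step 2: making the normalizations match exactly. Three conventions are involved: the $\sqrt{\operatorname{td}}$ versus $\operatorname{td}$ placement, the $e^{c_1/2}$ factor, and the $i^j$ versus $(-1)^j$ duality. I would first verify that with $\tilde u=u\sqrt{\operatorname{td}(X)}$, the Mukai pairing $\int\tilde u^\vee\,\overline{\tilde v}$ reproduces $\int u^\vee\,\overline v\,\operatorname{td}$ up to the $e^{-c_1/2}$ discrepancy. I would then check that this discrepancy is absorbed because Căldăraru's transform is an isometry for the pairing with the $e^{c_1/2}$ factor included.
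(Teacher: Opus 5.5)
Your overall strategy can be made to work: twist $\mathcal T_{k}$ by $\sqrt{\operatorname{td}}$ to get C\u{a}ld\u{a}raru's Mukai pairing, import his isometry, then take Hermitian parts via Proposition~\ref{propHHparity}. But Step~3 names the wrong isometry, and as written it fails.

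The $\sqrt{\operatorname{td}}$-normalized $\Phi^H$ preserves the Mukai pairing, not $\mathcal T_k$. Here is a counterexample. Take the spherical twist $T_{\OO_X}$ on a K3 surface. Its $\Phi^H$ is the reflection in $v(\OO_X)=1+[\mathrm{pt}]$, and it sends $[\mathrm{pt}]$ to $-1$. Both classes lie in $V_0(X)$, yet $\mathcal H_{0,X}([\mathrm{pt}],[\mathrm{pt}])=0$ while $\mathcal H_{0,X}(1,1)=\int_X\operatorname{td}(X)=2$.

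The correct statement follows from your Step~2 once the twist is carried through. The map ${}^\vee$ is a ring automorphism and $\operatorname{td}(X)^\vee=e^{-c_1(X)}\operatorname{td}(X)$, so $\sqrt{\operatorname{td}(X)}^{\,\vee}=e^{-c_1(X)/2}\sqrt{\operatorname{td}(X)}$. Hence, exactly,
\[
\bigl\langle u\sqrt{\operatorname{td}(X)},\,w\sqrt{\operatorname{td}(X)}\bigr\rangle_X=\int_Xu^\vee\wedge w\wedge\operatorname{td}(X).
\]
The $e^{c_1/2}$ in the Mukai pairing cancels against the duality twist, so your ``main obstacle'' disappears.

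The isometry of $\mathcal T_k$ is therefore the conjugate $F(u)=\sqrt{\operatorname{td}(Y)}^{\,-1}\,\Phi^H\bigl(\sqrt{\operatorname{td}(X)}\,u\bigr)$. By the projection formula this equals $\pi_{Y*}\bigl(\pi_X^*(u\wedge\operatorname{td}(X))\wedge\operatorname{ch}(\mathcal E)\bigr)$. That is exactly the paper's normalization; by Grothendieck--Riemann--Roch it sends $\operatorname{ch}(E)$ to $\operatorname{ch}(\Phi E)$. $F$ is real and built from $(r,r)$-classes, so it commutes with conjugation and preserves each $V_k$. Your Step~3 then goes through with $F$ in place of $\Phi^H$.

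After this fix your argument is the paper's, up to packaging. The paper proves the adjointness identity directly in the $\operatorname{td}$-normalization, using the adjoint kernel $\mathcal P^\vee\otimes p_X^*K_X[n]$. You instead quote the $\sqrt{\operatorname{td}}$ version and conjugate.

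There are three smaller errors. First, the displayed identity in Step~1, $\langle u,w\rangle_X=\int_X(u\sqrt{\operatorname{td}(X)})^\vee\wedge w\sqrt{\operatorname{td}(X)}$, is false. Its right-hand side equals $\int_Xu^\vee\wedge w\wedge e^{-c_1(X)/2}\operatorname{td}(X)$.

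Second, there is no convention to reconcile. C\u{a}ld\u{a}raru's Mukai dual is already $u\mapsto i^ju$ on $H^j$, as in the paper. Your proposed reconciliation would not have worked anyway: on $V_k(X)$ the total degree $p+q$ varies in steps of $2$, so $i^{p+q}/(-1)^{p+q}$ changes sign and is not a uniform constant.

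Third, you do not need the $\sqrt{\operatorname{td}}$-twisted HKR compatibility to see that the transform preserves $V_k$. Hodge type considerations on the kernel's class suffice, as in the paper.
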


\begin{proof}
We essentially follow C\u{a}ld\u{a}raru's proof \cite{Cal05} that the cohomological transform induced by a derived equivalence is an isometry for the Mukai pairing. \\

Represent a derived equivalence $\Phi:D^b(X)\simeq D^b(Y)$ by a Fourier-Mukai kernel $\mathcal P\in D^b(X\times Y)$, and denote the projections by $p_X,p_Y$. We use the following normalization of its cohomological transform:
\[
\Phi^H(u)=p_{Y*}\bigl(p_X^*(u\wedge\operatorname{td}(X))\wedge\operatorname{ch}(\mathcal P)\bigr).
\]
By Grothendieck-Riemann-Roch, this construction respects composition and sends the identity functor to the identity map. Thus, if $\Psi$ is a quasi-inverse of $\Phi$, the transformation $\Psi^H$ defined with the same normalization is the inverse of $\Phi^H$.

The Chern character and Todd classes are rational and have components of type $(r,r)$. Consequently, $\Phi^H$ commutes with complex conjugation and preserves $p-q$, hence induces an isomorphism $V_k(X)\simeq V_k(Y)$ for every $k$.

Since $\Psi$ is also the right adjoint of $\Phi$, its kernel is $\mathcal P^\vee\otimes p_X^*K_X[n]$, with the factors interchanged. Using
\[
(p_{Y*}w)^\vee=(-1)^np_{Y*}(w^\vee) \quad \textrm{and} \quad
\operatorname{td}(X)^\vee=e^{-c_1(X)}\operatorname{td}(X),
\]
the projection formula gives, for $u\in H^\bullet(X,\C)$ and $v\in H^\bullet(Y,\C)$,
\[
\begin{aligned}
\int_Y\Phi^H(u)^\vee\wedge v\wedge\operatorname{td}(Y)
&=(-1)^n\int_{X\times Y}
p_X^*\bigl(u^\vee\wedge\operatorname{td}(X)^\vee\bigr)
\wedge\operatorname{ch}(\mathcal P)^\vee
\wedge p_Y^*\bigl(v\wedge\operatorname{td}(Y)\bigr)\\
&=\int_Xu^\vee\wedge\Psi^H(v)\wedge\operatorname{td}(X).
\end{aligned}
\]
Indeed, $\operatorname{ch}(\mathcal P)^\vee=\operatorname{ch}(\mathcal P^\vee)$, and the factor $(-1)^ne^{-c_1(X)}$ is precisely the contribution of $K_X[n]$ to the Chern character of the adjoint kernel.

Since $\Psi^H\circ\Phi^H=\mathrm{id}$ and $\Phi^H$ commutes with complex conjugation, it follows that, for $u,v\in V_k(X)$,
\[
\int_Y\Phi^H(u)^\vee\wedge\overline{\Phi^H(v)}\wedge\operatorname{td}(Y)
=\int_Xu^\vee\wedge\overline v\wedge\operatorname{td}(X).
\]
The preceding integral identity gives
\[
\mathcal T_{k,Y}\bigl(\Phi^H(u),\Phi^H(v)\bigr)=\mathcal T_{k,X}(u,v).
\]
Applying this equality also with $u,v$ interchanged, Proposition \ref{propHHparity} yields
\[
\begin{aligned}
\mathcal H_{k,Y}\bigl(\Phi^H(u),\Phi^H(v)\bigr)
&=\frac12\left(
\mathcal T_{k,Y}\bigl(\Phi^H(u),\Phi^H(v)\bigr)
+\overline{\mathcal T_{k,Y}\bigl(\Phi^H(v),\Phi^H(u)\bigr)}
\right)\\
&=\frac12\left(\mathcal T_{k,X}(u,v)+\overline{\mathcal T_{k,X}(v,u)}\right)\\
&=\mathcal H_{k,X}(u,v).
\end{aligned}
\]
Since $\Phi^H:V_k(X)\to V_k(Y)$ is an isomorphism, this proves the required isometry. 
\end{proof}
\begin{cor}\label{corboundary}
Let $X$ and $Y$ be derived-equivalent smooth projective complex $n$-folds. For every $k\equiv n\pmod2$, the two sums
\[
\sum_{\substack{p-q=k\\q\ \mathrm{even}}}h^{p,q}(X),\quad
\sum_{\substack{p-q=k\\q\ \mathrm{odd}}}h^{p,q}(X)
\]
are equal to the corresponding sums for $Y$. In particular, for $n\geq2$,
\[
h^{n-2,0}(X)=h^{n-2,0}(Y) \quad \textrm{and} \quad h^{n-1,1}(X)=h^{n-1,1}(Y).
\]
The numbers $h^{n,0}$ and $h^{n-1,0}$ are also derived invariants. For $n\geq4$, we further have
\[
\begin{gathered}
h^{n-3,1}(X)=h^{n-3,1}(Y),\\
2h^{n-4,0}(X)+h^{n-2,2}(X)=2h^{n-4,0}(Y)+h^{n-2,2}(Y).
\end{gathered}
\]
Consequently, $h^{n-2,2}(X)=h^{n-2,2}(Y)$ whenever $h^{n-4,0}(X)=h^{n-4,0}(Y)$. If in addition $K_X\simeq\mathcal O_X$, then
\[
h^{2,0}(X)=h^{2,0}(Y).
\]
\end{cor}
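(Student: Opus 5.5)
The whole corollary should follow by combining Theorem \ref{theoHHinvariance} with Theorem \ref{theoHHsignature}. For $k\equiv n\pmod2$, Theorem \ref{theoHHinvariance} says the pair $\operatorname{sign}(\mathcal H_{k,X})$ is a derived invariant. Theorem \ref{theoHHsignature} expresses this pair, in an order depending only on $n \bmod 4$, as the two sums in the statement. Since $X$ and $Y$ have the same dimension $n$, the same ordering applies to both, so each sum separately agrees for $X$ and $Y$. This gives the first assertion. The remaining work is to list, for small values of $n-k$, which bidegrees $(p,q)$ with $0\le p,q\le n$ lie on the diagonal. We then collapse them using Serre duality and Hodge symmetry, in the form $h^{p,q}=h^{n-p,n-q}=h^{q,p}$.

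For $k=n-2$ the bidegrees are $(n-2,0)$, $(n-1,1)$ and $(n,2)$. Since $h^{n,2}=h^{0,n-2}=h^{n-2,0}$, the even sum is $2h^{n-2,0}$ and the odd sum is $h^{n-1,1}$. Hence both numbers are invariant. For $k=n$ only $(n,0)$ occurs, so $h^{n,0}$ is invariant. For $h^{n-1,0}$ the diagonal $k=n-1$ has the wrong parity. There I would instead use that $\Phi^H$ is an isomorphism $V_{n-1}(X)\simeq V_{n-1}(Y)$, which was shown in the proof of Theorem \ref{theoHHinvariance}. The diagonal $V_{n-1}$ consists of $(n-1,0)$ and $(n,1)$, and $h^{n,1}=h^{n-1,0}$, so $\dim V_{n-1}=2h^{n-1,0}$. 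For $n\ge4$ and $k=n-4$ the bidegrees are $(n-4,0)$, $(n-3,1)$, $(n-2,2)$, $(n-1,3)$ and $(n,4)$. Using $h^{n,4}=h^{n-4,0}$ and $h^{n-1,3}=h^{n-3,1}$, the even sum is $2h^{n-4,0}+h^{n-2,2}$ and the odd sum is $2h^{n-3,1}$. This yields the two displayed identities, and the conditional invariance of $h^{n-2,2}$ is immediate from them.

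For the Calabi--Yau statement, I would first note that $K_Y\simeq\OO_Y$. The Serre functor $-\otimes\omega_Y[n]$ is intrinsic to $D^b(Y)$, and an equivalence intertwines Serre functors. Since $S_X=[n]$, it follows that $\omega_Y\otimes-$ is isomorphic to the identity, so $\omega_Y\simeq\OO_Y$. In fact the argument only needs this hypothesis on $X$. Using Hodge symmetry and Serre duality with $K_X$ trivial,
\[
h^{2,0}(X)=h^{0,2}(X)=h^2(X,\OO_X)=h^{n-2}(X,K_X)=h^{n-2}(X,\OO_X)=h^{0,n-2}(X)=h^{n-2,0}(X).
\]
The same chain holds on $Y$, so the invariance of $h^{2,0}$ reduces to that of $h^{n-2,0}$ proved above. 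The degenerate case $n<2$ is trivial.

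There is no real obstacle in this argument. The only points needing care are the bookkeeping of which bidegrees lie on each diagonal within the range $0\le p,q\le n$, and checking that the ordering of the signature pair depends only on $n$, so that the two sums can be matched individually rather than only as an unordered pair.
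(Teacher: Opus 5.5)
Your proposal is correct and follows essentially the same route as the paper. You combine the signature formula with the isometry theorem, read off the diagonals $V_{n-2}$ and $V_{n-4}$ via Hodge symmetry and Serre duality, obtain $h^{n-1,0}$ from $\dim V_{n-1}=2h^{n-1,0}$, and treat the trivial-canonical-bundle case through uniqueness of Serre functors and the chain $h^{2,0}=h^{0,2}=h^{0,n-2}=h^{n-2,0}$. The only cosmetic difference is that you get $h^{n,0}$ from the signature on $V_n$ rather than from the HKR dimension, which is equally valid.
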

\begin{proof}
The first assertion follows from Theorems~\ref{theoHHsignature} and~\ref{theoHHinvariance}. Applied to $V_{n-2}(X)=H^{n-2,0}(X)\oplus H^{n-1,1}(X)\oplus H^{n,2}(X)$, the signature formula gives
\[
\operatorname{sign}(\mathcal H_{n-2,X})=\begin{cases}
\bigl(2h^{n-2,0}(X),h^{n-1,1}(X)\bigr),&n\equiv0,1\pmod4,\\
\bigl(h^{n-1,1}(X),2h^{n-2,0}(X)\bigr),&n\equiv2,3\pmod4,
\end{cases}
\]
since $h^{n,2}(X)=h^{n-2,0}(X)$ by Hodge symmetry and Poincar\'e duality, without any hypothesis on $K_X$. Both entries are therefore derived invariants. The HKR equalities $\dim V_n=h^{n,0}$ and $\dim V_{n-1}=2h^{n-1,0}$ prove the assertions concerning these boundary numbers.

For $n\geq4$, applying the signature formula to $V_{n-4}(X)$ similarly gives
\[
\operatorname{sign}(\mathcal H_{n-4,X})=\begin{cases}
\bigl(2h^{n-4,0}(X)+h^{n-2,2}(X),\,2h^{n-3,1}(X)\bigr),&n\equiv0,1\pmod4,\\
\bigl(2h^{n-3,1}(X),\,2h^{n-4,0}(X)+h^{n-2,2}(X)\bigr),&n\equiv2,3\pmod4.
\end{cases}
\]
Here Hodge symmetry and Poincar\'e duality identify $h^{n,4}$ with $h^{n-4,0}$ and $h^{n-1,3}$ with $h^{n-3,1}$. This proves the additional assertions.

Finally, if $K_X\simeq\mathcal O_X$, uniqueness of the Serre functor \cite{BK89} gives $\Phi\circ S_X\simeq S_Y\circ\Phi$. Since $S_X\simeq[n]$, it follows that $S_Y\simeq[n]$, and evaluation at $\mathcal O_Y$ gives $K_Y\simeq\mathcal O_Y$. Hodge symmetry and Serre duality then yield
\[
h^{n-2,0}(X)=h^{0,n-2}(X)=h^{0,2}(X)=h^{2,0}(X),
\]
and the same equalities hold for $Y$. This proves the final assertion.
\end{proof}

\begin{theo}\label{theofivefold}
Let $X$ and $Y$ be derived-equivalent smooth projective complex fivefolds. Then
\[
h^{p,q}(X)=h^{p,q}(Y)
\]
for $(p,q)\in\{(1,0),(3,0),(4,0),(5,0),(4,1),(2,1),(3,2)\}$, and
\[
h^{2,0}(X)+h^{3,1}(X)=h^{2,0}(Y)+h^{3,1}(Y),\qquad
h^{1,1}(X)+h^{2,2}(X)=h^{1,1}(Y)+h^{2,2}(Y).
\]
Moreover, if $K_X\simeq\mathcal O_X$, then $K_Y\simeq\mathcal O_Y$ and
\[
\forall (p,q) \in \{0,\ldots,5\}^2, \ h^{p,q}(X)=h^{p,q}(Y).
\]
\end{theo}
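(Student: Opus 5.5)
The plan is to read off each diagonal of the fivefold Hodge diamond using two tools. The first is the Hochschild dimensions $\dim V_k(X)$, which are derived invariant by HKR, or by Theorem~\ref{theoHHinvariance}. The second is the signatures on the odd diagonals, given by Corollary~\ref{corboundary}. Throughout I reduce Hodge numbers with Hodge symmetry $h^{p,q}=h^{q,p}$ and Serre duality $h^{p,q}=h^{5-p,5-q}$. Since $n=5\equiv1\pmod4$, Theorem~\ref{theoHHsignature} gives signature $(\sum_{q\ \mathrm{even}},\sum_{q\ \mathrm{odd}})$ on each $V_k$ with $k$ odd.

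\emph{Odd diagonals.} On $V_5=H^{5,0}$ the dimension gives $h^{5,0}$. On $V_3=H^{3,0}\oplus H^{4,1}\oplus H^{5,2}$ we have $h^{5,2}=h^{3,0}$, so the signature is $(2h^{3,0},h^{4,1})$. Hence $h^{3,0}$ and $h^{4,1}$ are invariant; this is also the case $k=n-2$ of Corollary~\ref{corboundary}. On $V_1=H^{1,0}\oplus H^{2,1}\oplus H^{3,2}\oplus H^{4,3}\oplus H^{5,4}$ we use $h^{5,4}=h^{1,0}$ and $h^{4,3}=h^{2,1}$, while $h^{3,2}$ is self-dual. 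The signature is therefore $(2h^{1,0}+h^{3,2},\,2h^{2,1})$, which gives $h^{2,1}$. To separate $h^{3,2}$ from $h^{1,0}$ I invoke Popa--Schnell's derived invariance of $h^{1,0}$ \cite{PS11}, which holds in any dimension; the invariance of $h^{3,2}$ then follows. The negative diagonals repeat this information by conjugation.

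\emph{Even diagonals.} These come from dimensions alone. $\dim V_4=h^{4,0}+h^{5,1}=2h^{4,0}$ gives $h^{4,0}$. $\dim V_2=h^{2,0}+h^{3,1}+h^{4,2}+h^{5,3}=2(h^{2,0}+h^{3,1})$ gives the first sum. $\dim V_0=2+2h^{1,1}+2h^{2,2}$ gives the second sum. This proves the first part of the statement.

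\emph{Trivial canonical bundle.} As in the proof of Corollary~\ref{corboundary}, $K_Y\simeq\mathcal O_Y$ follows from uniqueness of the Serre functor. That corollary then gives $h^{2,0}$, and hence $h^{3,1}$. The remaining task, which I expect to be the main obstacle, is to split $h^{1,1}$ from $h^{2,2}$; the signatures give nothing here because $V_0$ lies on the opposite parity, where the form vanishes when $c_1=0$. I would use the Libgober--Wood identity \cite{LW90}
\[
\sum_{p,q}(-1)^{p+q}\Bigl(p-\frac52\Bigr)^2h^{p,q}(X)=\frac{5}{12}\chi_{\mathrm{top}}(X)+\frac16\int_Xc_1(X)c_4(X),
\]
in which the last term vanishes. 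The topological Euler number is $\chi_{\mathrm{top}}=\sum_k(-1)^k\dim V_k$, which is derived invariant. In the left side every Hodge number except $h^{1,1}$ and $h^{2,2}$ is already known to be invariant. Counting $H^{1,1}$ with $H^{4,4}$ and $H^{2,2}$ with $H^{3,3}$, the left side is $\tfrac92h^{1,1}+\tfrac12h^{2,2}$ plus invariant terms. So $9h^{1,1}+h^{2,2}$ is invariant. Together with the invariance of $h^{1,1}+h^{2,2}$, this linear system has nonzero determinant and determines both numbers. All Hodge numbers then agree by the symmetries.
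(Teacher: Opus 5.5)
Your proof is correct and follows the paper's argument step by step. You use the HKR dimensions for the even diagonals, and the signature $(2h^{1,0}+h^{3,2},\,2h^{2,1})$ of $\mathcal H_{1,X}$ together with Popa--Schnell. You use Corollary~\ref{corboundary} for $h^{3,0}$ and $h^{4,1}$, and, when $K_X$ is trivial, for $h^{2,0}$. You finish with Libgober--Wood.

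The only difference is bookkeeping in that last step. You keep $\frac{5}{12}\chi_{\mathrm{top}}=\frac{5}{12}\sum_k(-1)^k\dim V_k$ as a derived-invariant right-hand side and obtain $9h^{1,1}+h^{2,2}$. The paper absorbs this term and uses $\chi(X,\mathcal O_X)=0$ and Serre duality to obtain $11h^{1,1}-h^{2,2}$. The two agree modulo the invariant $h^{1,1}+h^{2,2}$, since $\frac92h^{1,1}+\frac12h^{2,2}-\frac56\bigl(h^{1,1}+h^{2,2}\bigr)=\frac13\bigl(11h^{1,1}-h^{2,2}\bigr)$.
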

\begin{proof}
The invariance of Hochschild homology and the Hochschild-Kostant-Rosenberg isomorphism give
\[
\dim V_k(X)=\sum_{p-q=k}h^{p,q}(X)=\sum_{p-q=k}h^{p,q}(Y)=\dim V_k(Y)
\]
for every $k$. Popa-Schnell's theorem \cite{PS11} gives the invariance of $h^{1,0}$, while Corollary~\ref{corboundary} gives that of $h^{3,0}$ and $h^{4,1}$, as well as $h^{4,0}$ and $h^{5,0}$. Hodge symmetry and Poincar\'e duality give
\[
\dim V_2(X)=2h^{2,0}(X)+2h^{3,1}(X),
\]
so $h^{2,0}+h^{3,1}$ is invariant. Next, Theorem~\ref{theoHHsignature} gives
\[
\operatorname{sign}(\mathcal H_{1,X})=\bigl(2h^{1,0}(X)+h^{3,2}(X),\,2h^{2,1}(X)\bigr).
\]
By Theorem~\ref{theoHHinvariance} and the invariance of $h^{1,0}$, both $h^{2,1}$ and $h^{3,2}$ are therefore invariant. Finally,
\[
\dim V_0(X)=2+2h^{1,1}(X)+2h^{2,2}(X)
\]
shows that $h^{1,1}+h^{2,2}$ is invariant. This proves the first assertion without any assumption on the canonical bundle.\\

Moreover, suppose that $K_X\simeq\mathcal O_X$. By Corollary~\ref{corboundary}, $K_Y\simeq\mathcal O_Y$ and $h^{2,0}$ is invariant. Equivalently, Serre duality gives $h^{2,0}=h^{3,0}$ on both varieties. The sum $h^{2,0}+h^{3,1}$ then determines $h^{3,1}$. All boundary Hodge numbers are determined, since $h^{p,0}=h^{5-p,0}$ and $h^{0,0}=h^{5,0}=1$. Thus, by Hodge symmetry and Serre duality, only $h^{1,1}$ and $h^{2,2}$ remain to be separated. We conclude with the Libgober-Wood identity.\\

Now, the Libgober-Wood identity \cite{LW90}, valid for every smooth projective complex $n$-fold with $c_1=0$, gives
\[
\sum_{p,q}(-1)^{p+q}\left(\left(p-\frac n2\right)^2-\frac n{12}\right)h^{p,q}(X)=0.
\]
For $n=5$, multiplying by $6$ and summing first over $q$, this becomes
\[
35\chi(X,\mathcal O_X)-11\chi(X,\Omega_X^1)-\chi(X,\Omega_X^2)+\chi(X,\Omega_X^3)+11\chi(X,\Omega_X^4)-35\chi(X,\Omega_X^5)=0.
\]
Serre duality gives $\chi(X,\Omega_X^{5-p})=-\chi(X,\Omega_X^p)$, while $K_X\simeq\mathcal O_X$ implies $\chi(X,\mathcal O_X)=0$. Hence
\[
\chi(X,\Omega_X^2)=-11\chi(X,\Omega_X^1).
\]
Expanding these Euler characteristics, the boundary terms cancel by Hodge symmetry, Serre duality and the triviality of the canonical bundle, yielding
\[
11h^{1,1}(X)-10h^{1,2}(X)-h^{2,2}(X)+h^{2,3}(X)+10h^{1,3}(X)-11h^{1,4}(X)=0.
\]
Every term other than $h^{1,1}$ and $h^{2,2}$ is already invariant, so $11h^{1,1}-h^{2,2}$ is invariant. Together with the invariance of $h^{1,1}+h^{2,2}$, this determines both remaining Hodge numbers and proves the theorem.
\end{proof}

\begin{rem}
\begin{enumerate}
\item 
When $K_X \not\simeq \OO_X$, the signatures on the even Hochschild diagonals do not determine the remaining Hodge numbers, even when combined with the dimensions of Hochschild homology. An explicit example is provided by the following smooth projective rational fivefolds.

Let $S_r$ be the blow-up of $\mathbf P^2$ at $r$ points in general position, and set
\[
X=S_1\times\mathbf P_{S_7}\bigl(\mathcal O_{S_7}\oplus K_{S_7}^{-1}\bigr),
\qquad
Y=S_2\times\mathbf P_{S_5}\bigl(\mathcal O_{S_5}\oplus K_{S_5}^{-2}\bigr),
\]
where projectivization parametrizes one-dimensional quotients. The blow-up and projective bundle formulas show that all their off-diagonal Hodge numbers vanish, while
\[
\begin{array}{c|rrrrrr}
 &h^{0,0}&h^{1,1}&h^{2,2}&h^{3,3}&h^{4,4}&h^{5,5}\\ \hline
X&1&11&28&28&11&1\\
Y&1&10&29&29&10&1
\end{array}
\]
Consequently, $V_k(X)=V_k(Y)=0$ for $k\neq0$, and
\[
\dim V_0(X)=\dim V_0(Y)=80.
\]
Both first Chern classes are non-zero, since their restrictions to the fibres of the projective bundles have degree $2$. Nevertheless,
\[
\operatorname{sign}(\mathcal H_{0,X})
=\operatorname{sign}(\mathcal H_{0,Y})=(29,22),
\qquad
n_0(\mathcal H_{0,X})=n_0(\mathcal H_{0,Y})=29.
\]

These varieties thus have the same Hochschild dimensions and the same signatures and nullities for all the forms considered here, but different $h^{1,1}$ and $h^{2,2}$. Moreover, the Libgober-Wood identity gives
\[
\int_Xc_1(X)c_4(X)=256,\qquad
\int_Yc_1(Y)c_4(Y)=232,
\]
so these numerical invariants do not determine the Chern number appearing in that identity either. We nonetheless note that the varieties $X$ and $Y$ are not derived equivalent. Indeed, the projective bundle formula gives
\[
h^0(X,K_X^{-1})=99,\qquad
h^0(Y,K_Y^{-1})=240,
\]
whereas these dimensions are preserved by derived equivalence, by the derived invariance of the graded anticanonical ring. 

\item For smooth projective complex sixfolds with trivial canonical bundle, the preceding results also give a reduction of the derived invariance problem to two Hodge numbers. More precisely, the numbers
\[
h^{1,0},\quad h^{2,0},\quad h^{3,1},\quad h^{5,1},\quad h^{2,2},\quad h^{4,2}
\]
and the combinations
\[
h^{3,0}+h^{4,1},\qquad h^{2,1}+h^{3,2},\qquad 2h^{1,1}+h^{3,3},\qquad h^{1,1}-h^{3,0}-h^{2,1}
\]
are derived invariants.

Indeed, Theorem \ref{theoHHsignature}, together with Hodge symmetry, Serre duality and the triviality of the canonical bundle, gives
\[
\begin{aligned}
\operatorname{sign}(\mathcal H_{4,X})&=\bigl(h^{5,1}(X),\,2h^{2,0}(X)\bigr),\\
\operatorname{sign}(\mathcal H_{2,X})&=\bigl(2h^{3,1}(X),\,2h^{2,0}(X)+h^{4,2}(X)\bigr),\\
\operatorname{sign}(\mathcal H_{0,X})&=\bigl(2h^{1,1}(X)+h^{3,3}(X),\,2+2h^{2,2}(X)\bigr).
\end{aligned}
\]
Their derived invariance proves the assertions concerning the individual Hodge numbers, except for $h^{1,0}$, which follows from \cite{PS11}, and also proves the invariance of $2h^{1,1}+h^{3,3}$. Moreover, the HKR equalities
\[
\dim V_3(X)=2h^{3,0}(X)+2h^{4,1}(X) \quad \textrm{and} \quad
\dim V_1(X)=2h^{1,0}(X)+2h^{2,1}(X)+2h^{3,2}(X)
\]
give the invariance of $h^{3,0}+h^{4,1}$ and $h^{2,1}+h^{3,2}$.

Finally, the Libgober-Wood identity \cite{LW90} in dimension six, using $c_1(X)=0$ and $\chi(X,\Omega_X^{6-p})=\chi(X,\Omega_X^p)$, reads
\[
34\chi(X,\mathcal O_X)-14\chi(X,\Omega_X^1)+2\chi(X,\Omega_X^2)+\chi(X,\Omega_X^3)=0.
\]
Expanding the Euler characteristics gives
\[
\begin{aligned}
0={}&68-96h^{1,0}+72h^{2,0}-32h^{3,0}+14h^{1,1}-16h^{2,1}\\
&+12h^{3,1}-16h^{4,1}+14h^{5,1}+2h^{2,2}+2h^{4,2}-h^{3,3},
\end{aligned}
\]
where all Hodge numbers refer to $X$. The terms involving the Hodge numbers not yet individually determined can be grouped as
\[
-32h^{3,0}+14h^{1,1}-16h^{2,1}-16h^{4,1}-h^{3,3}
=16(h^{1,1}-h^{3,0}-h^{2,1})-16(h^{3,0}+h^{4,1})-(2h^{1,1}+h^{3,3}).
\]
Every other term and both parenthesized combinations on the right already shown above are derived invariant. Hence $h^{1,1}-h^{3,0}-h^{2,1}$ is invariant as well.

Consequently, if two such derived-equivalent sixfolds have equal $h^{3,0}$ and $h^{2,1}$, these invariant combinations successively determine $h^{4,1}$, $h^{3,2}$, $h^{1,1}$ and $h^{3,3}$. All remaining Hodge numbers follow from the invariants listed above, Hodge symmetry, Serre duality and the identities $h^{p,0}=h^{6-p,0}$. Thus proving the derived invariance of $h^{3,0}$ and $h^{2,1}$ would establish the full conjecture for sixfolds with trivial canonical bundle.
\end{enumerate}
\end{rem}

\subsection{A new proof of the Huybrechts-Nieper-Wi\ss kirchen theorem}\label{subsecIHS}
Recall that a compact K\"ahler manifold is \emph{irreducible holomorphic symplectic} if it is simply connected and its space of holomorphic two-forms is generated by an everywhere non-degenerate form. The derived invariance of $h^{2,0}$ established in the preceding subsection provides the starting point for a new proof that every smooth projective derived partner of an irreducible holomorphic symplectic variety is again irreducible holomorphic symplectic.

We begin with the consequence of Verbitsky's orthogonal action needed in the proof.

\begin{lem}\label{lemisotropicranks}
Let $X$ be an irreducible holomorphic symplectic variety of dimension $2n$, with Beauville-Bogomolov form $q_X$ and holomorphic symplectic form $\sigma_X$. For every non-zero $q_X$-isotropic class $\alpha\in H^2(X,\C)$, there exists a graded algebra automorphism $G$ of $H^\bullet(X,\C)$ such that $G(\alpha)=\sigma_X$. In particular, $\alpha^n\neq0$, and for every $j$,
\[
\operatorname{rk}\biggl(\alpha^n\cup-:H^j(X,\C)\to H^{j+2n}(X,\C)\biggr)
=h^{0,j}(X).
\]
\end{lem}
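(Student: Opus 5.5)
We plan to use Verbitsky's theorem on the cohomology of an irreducible holomorphic symplectic manifold. The Lie algebra $\mathfrak g\simeq\mathfrak{so}(4,b_2-2)$ generated by the Lefschetz $\mathfrak{sl}_2$-triples of all classes with $q_X(\omega)\neq0$ acts on $H^\bullet(X,\C)$. Its degree-zero part is $\mathfrak{so}(H^2(X,\C),q_X)\oplus\C h$, and that $\mathfrak{so}$ part acts by derivations. Exponentiating, the connected group $\mathrm{SO}(H^2(X,\C),q_X)$ acts on $H^\bullet(X,\C)$ by graded algebra automorphisms, and this action restricts on $H^2$ to the standard representation. (Equivalently, one may use that the subalgebra generated by $H^2$ is $\mathrm{Sym}^\bullet H^2/(\alpha^{n+1}:q_X(\alpha)=0)$. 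For the full ring, however, the derivation action is what is needed.)

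First step. The class $\sigma_X$ is non-zero and $q_X$-isotropic, since $q_X(\sigma_X)$ is proportional to $\int\sigma_X^{n+1}\bar\sigma_X^{n-1}=0$. Given another non-zero isotropic $\alpha$, Witt's theorem over $\C$ gives an element $g\in\mathrm{O}(H^2(X,\C),q_X)$ with $g(\alpha)=\sigma_X$. We must arrange $g\in\mathrm{SO}$. If $\det g=-1$, compose $g$ with a reflection in a non-isotropic vector orthogonal to both $\alpha$ and $\sigma_X$. Such a vector exists because $b_2\geq3$ and the common orthogonal of two vectors has dimension $b_2-2\geq1$. When $b_2-2$ is small, some care is needed to find a non-isotropic vector there; if that fails, one can instead rescale within the hyperbolic plane spanned by $\alpha$ and a dual isotropic vector. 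Since $\mathrm{SO}(H^2(X,\C),q_X)$ is connected, every such $g$ lies in the image of the exponential-generated group, so it lifts to a graded algebra automorphism $G$ of $H^\bullet(X,\C)$ with $G(\alpha)=\sigma_X$. I expect the main obstacle to be precisely this lifting: making sure Verbitsky's action integrates to algebra automorphisms of the whole cohomology ring, not just of the subalgebra generated by $H^2$. This is where I would cite Verbitsky (and Looijenga--Lunts).

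Second step. Since $G$ is a graded algebra automorphism with $G(\alpha)=\sigma_X$, we have $G\circ(\alpha^n\cup-)=(\sigma_X^n\cup-)\circ G$ degree by degree. The ranks therefore agree:
\[
\operatorname{rk}(\alpha^n\cup-)=\operatorname{rk}(\sigma_X^n\cup-) \quad \textrm{on } H^j(X,\C).
\]
It remains to compute the rank of $\sigma_X^n\cup-:H^j\to H^{j+2n}$. Cupping with $\sigma_X^n$ maps $H^{p,q}$ to $H^{p+2n,q}$, which vanishes unless $p=0$. So it kills $\bigoplus_{p>0}H^{p,j-p}$, and its rank is that of $H^{0,j}\to H^{2n,j}$. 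Now $\sigma_X^n$ trivializes $K_X$, so in Dolbeault terms this map is the isomorphism $H^j(X,\OO_X)\xrightarrow{\sim}H^j(X,K_X)$ induced by $\OO_X\simeq K_X$. Hence the rank is $h^{0,j}$. Taking $j=0$ shows in particular that $\alpha^n\neq0$.
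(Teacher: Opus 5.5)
Your proposal is essentially the paper's proof: Verbitsky's action of $\mathfrak{so}(H^2(X,\C),q_X)$ by derivations is integrated to graded algebra automorphisms lifting an orthogonal map that sends $\alpha$ to $\sigma_X$, and the rank of $\sigma_X^n\cup-$ is then computed by Hodge type exactly as you do. Two minor corrections: first, the integrated action is in general only by $\operatorname{Spin}(H^2(X,\C),q_X)$, not $\mathrm{SO}$, since odd cohomology can be spinorial (e.g.\ for generalized Kummer varieties); your remark that $g$ lies in the image on $H^2$ of the group generated by exponentials already supplies the lift. Second, to correct $\det g=-1$ it suffices to replace $g$ by $g\circ r_v$ with $v\in\alpha^\perp$ non-isotropic, and such a $v$ always exists because $\alpha^\perp/\C\alpha$ is non-degenerate of dimension $b_2-2\geq1$; so the small-$b_2$ worry and the rescaling fallback (which has determinant $1$ and would not fix the sign) can be dropped.
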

\begin{proof}
Verbitsky's action of $\mathfrak{so}(H^2(X,\C),q_X)$ on the full cohomology ring is by degree-preserving derivations and restricts to the standard representation on $H^2(X,\C)$ \cite{Ver95}. It integrates to an action of $\operatorname{Spin}(H^2(X,\C),q_X)$ by graded algebra automorphisms of $H^{\bullet}(X, \C)$; see also \cite[Proposition 6.1]{Bot22}. Since $b_2(X)\geq3$, the complex special orthogonal group is transitive on non-zero isotropic vectors. Both $\alpha$ and $\sigma_X$ are such vectors, so an orthogonal transformation taking $\alpha$ to $\sigma_X$ lifts to the required automorphism $G$. In particular, $G(\alpha^n)=\sigma_X^n\neq0$, and multiplicativity gives
\[
G\circ L_{\alpha^n}=L_{\sigma_X^n}\circ G,
\quad \textrm{where} \ L_\beta(u)=\beta\cup u.
\]
The two multiplication maps thus have the same rank in each total degree. \\

On $H^{a,b}(X)$, multiplication by $\sigma_X^n$ is zero if $a>0$, by Hodge type. On $H^{0,j}(X)$ it is an isomorphism onto $H^{2n,j}(X)$, induced by the trivialization $\mathcal O_X\xrightarrow{\sim}K_X$ given by $\sigma_X^n$. Hence
\begin{equation}\label{eqsymplectickernel}
\ker\bigl(L_{\sigma_X^n}|_{H^j(X,\C)}\bigr)
=\bigoplus_{\substack{a+b=j\\a>0}}H^{a,b}(X),
\end{equation}
which proves that
\[
\operatorname{rk}\biggl(\sigma_X^n\cup-:H^j(X,\C)\to H^{j+2n}(X,\C)\biggr)
=h^{0,j}(X).
\]
As a consequence, the relation $G\circ L_{\alpha^n}=L_{\sigma_X^n}\circ G$, with $G$ being a graded automorphism of the full cohomology ring, gives
\[
\operatorname{rk}\biggl(\alpha^n\cup-:H^j(X,\C)\to H^{j+2n}(X,\C)\biggr)
=h^{0,j}(X).
\]
\end{proof}

\begin{theo}[Huybrechts-Nieper-Wi\ss kirchen \cite{HNW11}]\label{theoIHS}
Let $X$ and $Y$ be smooth projective complex varieties with equivalent bounded derived categories. If $X$ is irreducible holomorphic symplectic, then $Y$ is irreducible holomorphic symplectic.
\end{theo}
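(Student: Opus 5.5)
Let $\dim X=2n$, so $\dim Y=2n$ as well, since derived equivalences preserve dimension. I will run the argument announced in the introduction.

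\emph{Step 1: $Y$ has trivial canonical bundle and $h^{2,0}(Y)=1$.} Since $K_X\simeq\mathcal O_X$, Corollary~\ref{corboundary} gives $K_Y\simeq\mathcal O_Y$ and $h^{2,0}(Y)=h^{2,0}(X)=1$. Let $\sigma_Y$ be a generator of $H^0(Y,\Omega^2_Y)$.

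\emph{Step 2: $\sigma_Y$ is symplectic.} I will use the multiplicative HKR isomorphism on Hochschild cohomology, twisted by $\sqrt{\operatorname{td}}$, which is a ring isomorphism. A derived equivalence induces a graded ring isomorphism $HH^\bullet(X)\simeq HH^\bullet(Y)$. Since $K$ is trivial, $HH^2$ contains the summand $H^0(\wedge^2 T)\simeq H^0(\Omega^2)$, and this identification is compatible with the algebra structures via contraction with the holomorphic volume form. The plan is to detect $\sigma$ as an element $\pi$ of $HH^2$ satisfying $\pi^n\neq 0$, using the bivector $\pi_X=\sigma_X^{-1}$. The obstacle is that $\Phi^{HH}$ need not map the summand $H^0(\wedge^2T_X)$ onto $H^0(\wedge^2T_Y)$. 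To handle this, I intend to use the grading of $HH^2=H^2(\mathcal O)\oplus H^1(T)\oplus H^0(\wedge^2T)$ together with nilpotence. The $H^0(\wedge^2T)$-component of a product is the product of the $H^0(\wedge^2T)$-components. Moreover, $\pi^n$ has nonzero component in $H^0(\wedge^{2n}T)=HH^{2n}$ part $H^0(\omega^{-1})$, and this component is a polynomial in the top part. Comparing top-degree components, I expect to show that the image $\Phi(\pi_X)$ has top component $\pi'$ with $\pi'^n\neq0$ in $H^0(\wedge^{2n}T_Y)\simeq\C$. The point is that the filtration by $q$ in $H^p(\wedge^qT)$ is multiplicative and preserved up to lower terms, which is the part I expect to be hardest to justify. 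Then $\pi'^n$ is a nonvanishing section of $\omega_Y^{-1}\simeq\mathcal O$, so $\pi'$ is a nondegenerate holomorphic bivector. Dually, $\sigma_Y$ is nondegenerate everywhere.

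\emph{Step 3: $h^{0,p}(Y)=h^{0,p}(X)$.} For $Y$, multiplication by $\sigma_Y^n$ has rank $h^{0,j}(Y)$ on $H^j$, by the same Hodge type argument as in Lemma~\ref{lemisotropicranks}. I will transport this to $X$ via the cohomological Fourier--Mukai transform, or more simply via HKR on the Hochschild side, where the ring isomorphism gives equal ranks for multiplication by corresponding elements. The corresponding class on $X$ is a $q_X$-isotropic class $\alpha$, because its $n+1$-st power vanishes and by Fujiki. Lemma~\ref{lemisotropicranks} then gives rank $h^{0,j}(X)=1$ for even $j$ and $0$ for odd $j$. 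Hence $h^{0,j}(Y)=1$ for $j$ even and $0$ for $j$ odd, so $\chi(\mathcal O_Y)=n+1$.

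\emph{Step 4: simple connectivity.} The Beauville--Bogomolov decomposition gives a finite étale cover $\tilde Y=T\times\prod CY_i\times\prod IHS_j\to Y$. Since $h^{1,0}(Y)=0$ and the cover has the given holomorphic form structure, $h^{2,0}=1$ and nondegeneracy force $\tilde Y$ to be a single IHS $Z$; torus and Calabi--Yau factors would contribute wrong $h^{0,p}$ or $h^{2,0}$. Then $\chi(\mathcal O_Z)=n+1=d\,\chi(\mathcal O_Y)=d(n+1)$, where $d$ is the degree of the cover. This is the holomorphic Lefschetz/multiplicativity step, and it yields $d=1$. Therefore $Y$ is simply connected, hence irreducible holomorphic symplectic.
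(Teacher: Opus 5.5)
Your Step 1 matches the paper, and your Step 3 is the paper's rank argument, but only once $Y$ is known to be symplectic. The proposal has a genuine gap at Step 2, which is the heart of the proof, and two further gaps in Step 4.

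\textbf{Step 2.} The property you want, that a derived equivalence preserves the polyvector-degree filtration of $HT^\bullet$ up to lower terms, is false; only the total degree of $HH^\bullet$ is preserved. For an abelian variety, the Poincar\'e transform exchanges $H^0(T)$ and $H^1(\mathcal O)$ inside $HT^1$. For a K3 surface $S$, the spherical twist by $\mathcal O_S$ acts on $H^\bullet(S,\C)$ by the reflection exchanging $H^0$ and $H^4$. In the $\sqrt{\operatorname{td}}$-twisted HKR picture it therefore sends the bivector line $H^0(\wedge^2T_S)$ into $H^2(S,\mathcal O_S)$. So the $H^0(\wedge^2T_Y)$-component $\pi'$ of the image of $\pi_X$ can vanish, and $\Phi(\pi_X)^n\neq0$ tells you nothing about $\pi'^n$.

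The missing idea is to go from $Y$ to $X$, starting from a class whose nilpotence is automatic:
\begin{itemize}
\item Take $0\neq\eta\in H^2(Y,\mathcal O_Y)\subset HT^2(Y)$. Then $\eta^{n+1}\in H^{2n+2}(Y,\mathcal O_Y)=0$.
\item Transport $\eta$ to $\alpha\in H^2(X,\C)$ through the algebra isomorphism $HT^\bullet(Y)\simeq HT^\bullet(X)\simeq H^\bullet(X,\C)$, the last map given by $\sigma_X$. Then $\alpha\neq0$ and $\alpha^{2n}=0$.
\item Fujiki's relation makes $\alpha$ isotropic, and Lemma~\ref{lemisotropicranks} gives $\alpha^n\neq0$, hence $\eta^n\neq0$.
\item Then $\overline\eta$ is a holomorphic two-form whose $n$-th power is a non-zero, hence nowhere vanishing, section of $K_Y\simeq\mathcal O_Y$.
\end{itemize}
With $Y$ symplectic, your Step 3 goes through, but only via the HKR algebra isomorphism $H^\bullet(Y,\C)\simeq H^\bullet(X,\C)$ built from $\sigma_Y$ and $\sigma_X$. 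The cohomological Fourier--Mukai transform is not multiplicative, so that option fails.

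\textbf{Step 4.} First, the Hodge numbers of $Y$ only record $\Gamma$-invariant forms on the cover, so $h^{1,0}(Y)=0$ cannot exclude a torus factor of $\widetilde Y$. You need $\chi(\mathcal O_{\widetilde Y})=(\deg\pi)(n+1)\neq0$, which Step 3 provides, against $\chi(\mathcal O)=0$ for any product with a positive-dimensional torus. Calabi--Yau factors are correctly excluded by non-degeneracy of $\pi^*\sigma_Y$.

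Second, and more seriously, nothing forces $\widetilde Y$ to be a single irreducible holomorphic symplectic manifold. The condition $h^{2,0}(Y)=1$ only says that the deck group permutes the factors of $\widetilde Y=\prod_i M_i$ transitively. Your count $d(n+1)=\chi(\mathcal O_Z)$ presupposes a single factor $Z$. The paper settles this with the holomorphic Lefschetz theorem. For a deck transformation $g$, the composite of the maps along a cycle of its permutation of the factors is a symplectic automorphism of some $M_i$ of dimension $2m$. Its holomorphic Lefschetz number is $m+1\neq0$, so it has a fixed point, which produces a fixed point of $g$. Since deck transformations act freely, $\Gamma$ is trivial.
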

\begin{proof}
Write $\dim X=2n$. Derived invariance of dimension and compatibility with Serre functors give $\dim Y=2n$ and $K_Y\simeq\mathcal O_Y$ \cite{Huy06,BK89}. Corollary~\ref{corboundary} gives
\[
h^{2,0}(Y)=h^{2,0}(X)=1.
\]
We prove successively that $Y$ is holomorphic symplectic, that its holomorphic Euler characteristic is $n+1$, and that it is simply connected.

\smallskip\noindent\textit{Existence of a symplectic form on $Y$.}
For a smooth projective variety $Z$, set
\[
HT^j(Z)=\bigoplus_{a+b=j}H^b(Z,\bigwedge^aT_Z).
\]
The HKR isomorphism is an isomorphism of graded algebras $HH^\bullet(Z)\simeq HT^\bullet(Z)$ \cite[Corollary 1.5]{CVdB10}. On $X$, contraction with $\sigma_X$ gives $T_X\simeq\Omega_X^1$ and hence a multiplicative identification
\[
HT^\bullet(X)\simeq\bigoplus_{a,b}H^b(X,\Omega_X^a)
\simeq H^\bullet(X,\C),
\]
with total degree $a+b$ and the usual graded cup products. Derived invariance of the Hochschild cohomology algebra therefore yields a graded algebra isomorphism
\[
\Theta:HT^\bullet(Y)\xrightarrow{\sim}H^\bullet(X,\C).
\]
Choose $0\neq\eta\in H^2(Y,\mathcal O_Y)\subset HT^2(Y)$ and put $\alpha=\Theta(\eta)$. Since $\eta^{n+1}\in H^{2n+2}(Y,\mathcal O_Y)=0$, we have $\alpha^{n+1}=0$, hence $\alpha^{2n}=0$. Fujiki's relation gives
\[
0=\int_X\alpha^{2n}=c_Xq_X(\alpha)^n,\quad \textrm{with} \ c_X\neq0.
\]
Thus $\alpha$ is non-zero and isotropic. Lemma \ref{lemisotropicranks} implies $\alpha^n\neq0$, and consequently $\eta^n\neq0$. Under the Dolbeault identification, the complex conjugate of $\eta$ is a holomorphic two-form $\sigma_Y$ with $\sigma_Y^n\neq0$. Since $K_Y\simeq\mathcal O_Y$ and $Y$ is connected and projective, this non-zero section of $K_Y$ is nowhere vanishing. Therefore $\sigma_Y$ is everywhere non-degenerate.

\smallskip\noindent\textit{Ranks of multiplication and holomorphic forms.}
Both varieties are now holomorphic symplectic. Using their symplectic forms to identify polyvectors with forms, the same multiplicative HKR argument gives a graded algebra isomorphism
\[
\Psi:H^\bullet(Y,\C)\xrightarrow{\sim}H^\bullet(X,\C).
\]
No compatibility with Hodge decompositions is asserted. Put $\beta=\Psi(\sigma_Y)$. As above, $\beta\neq0$ and $\beta^{n+1}=0$ imply $q_X(\beta)=0$. The algebra isomorphism $\Psi$ and Lemma \ref{lemisotropicranks} on $X$ give, for every $j$,
\[
\begin{aligned}
h^{0,j}(Y)
&=\operatorname{rk}\bigl(L_{\sigma_Y^n}:H^j(Y,\C)\to H^{j+2n}(Y,\C)\bigr)\\
&=\operatorname{rk}\bigl(L_{\beta^n}:H^j(X,\C)\to H^{j+2n}(X,\C)\bigr)\\
&=h^{0,j}(X).
\end{aligned}
\]
For an irreducible holomorphic symplectic variety, the holomorphic forms are generated by its symplectic form \cite{Bea83}. Consequently,
\begin{equation}\label{eqIHSholomorphicforms}
h^{0,j}(Y)=
\begin{cases}1,&j=0,2,\ldots,2n,\\0,&j\text{ odd},\end{cases}
\qquad \textrm{and} \quad \chi(Y,\mathcal O_Y)=n+1.
\end{equation}

\smallskip\noindent\textit{Simple connectivity.}
By the Beauville-Bogomolov decomposition, there is a connected finite \'etale cover $\pi:\widetilde Y\to Y$ with
\[
\widetilde Y\simeq A\times\prod_i M_i\times\prod_\ell C_\ell,
\]
where $A$ is a complex torus, the $M_i$ are irreducible holomorphic symplectic, and the $C_\ell$ are simply connected Calabi-Yau manifolds of dimension at least three, with no holomorphic forms in intermediate degrees. A factor $C_\ell$ has neither holomorphic one-forms nor holomorphic two-forms. Writing $\widetilde Y=C_\ell\times W$, the K\"unneth decomposition and the vanishings
$H^0(C_\ell,\Omega_{C_\ell}^1)=H^0(C_\ell,\Omega_{C_\ell}^2)=0$ show that every holomorphic two form on $\widetilde Y$ is pulled back from $W$. Its contraction with any vector tangent to
$C_\ell$ is therefore zero. Since $\pi^*\sigma_Y$ is everywhere non-degenerate, no such factor $C_\ell$ can occur.

If $A$ had positive dimension, multiplicativity of the holomorphic Euler characteristic on products would give $\chi(\widetilde Y,\mathcal O_{\widetilde Y})=0$. On the other hand, Hirzebruch-Riemann-Roch for the \'etale cover gives
\[
\chi(\widetilde Y,\mathcal O_{\widetilde Y})
=(\deg\pi)\chi(Y,\mathcal O_Y)=(\deg\pi)(n+1)\neq0.
\]
Hence $A$ is a point and $\widetilde Y=\prod_{i=1}^rM_i$ is simply connected. It is therefore the universal cover, with finite deck group $\Gamma$.

By uniqueness of the Beauville-Bogomolov decomposition \cite[Theorem 2]{Bea83}, every automorphism of $\widetilde Y=\prod_{i=1}^r M_i$ permutes the factors $M_i$. In particular, this holds for every deck transformation. Since $H^0(M_i,\Omega_{M_i}^1)=0$, we can write
\[
\pi^*\sigma_Y=\sum_{i=1}^r\tau_i,
\]
where each $\tau_i$ is the pullback of a non-zero symplectic form on $M_i$. Every element of $\Gamma$ preserves this sum.

Let $g\in\Gamma$ and consider one cycle of its permutation of the factors:
\[
f_1:M_{i_1}\to M_{i_2},\ \ldots,\ 
f_s:M_{i_s}\to M_{i_1}.
\]
The equality $g^*\pi^*\sigma_Y=\pi^*\sigma_Y$ gives $f_j^*\tau_{i_{j+1}}=\tau_{i_j}$, with cyclic indices. Thus $f=f_s\circ\cdots\circ f_1$ is a symplectic automorphism of $M_{i_1}$. If $\dim M_{i_1}=2m$, its action on each $H^{0,2p}(M_{i_1})=\C\overline{\tau}_{i_1}^{\wedge p}$ is the identity, and the odd groups vanish. Its holomorphic Lefschetz number is therefore
\[
\sum_q(-1)^q\operatorname{Tr}\bigl(f^*|H^q(M_{i_1},\mathcal O_{M_{i_1}})\bigr)=m+1\neq0.
\]
The holomorphic Lefschetz theorem \cite{AtB67} implies that $f$ has a fixed point. Starting from this point and applying $f_1,\ldots,f_{s-1}$ produces a tuple fixed by $g$ on the factors in the cycle. Doing this for every cycle gives a fixed point of $g$ on $\widetilde Y$. Non-identity deck transformations act freely, so $\Gamma$ is trivial. Hence $Y$ is simply connected; together with $h^{2,0}(Y)=1$ and the non-degeneracy of $\sigma_Y$, this proves the theorem.
\end{proof}

\begin{rem}\label{remHNWgap}
Theorem~\ref{theoIHS} is due to Huybrechts and Nieper-Wi\ss kirchen \cite[Theorem 4.9, pp.~954-955]{HNW11}. The proof above offers a somewhat shorter and simpler approach, providing an alternative to two steps in the published argument that require further justification. \\

The final application of Proposition A.1 in the proof of Huybrechts-Nieper-Wi{\ss}kirchen's Theorem 4.9 requires the full list of values of $h^{p,0}$, which is not explicitly established there. Once holomorphic symplecticity has been proved, their Lemma 4.8 yields $h^{2,0}=1$, while the invariance of $HH^1$ gives $h^{1,0}=0$. Schwald's subsequent characterization of irreducible holomorphic symplectic manifolds \cite[Theorem 1]{Schwald} shows that these two equalities suffice, thus providing a way to complete their argument without determining the remaining $h^{p,0}$ beforehand. Our proof provides an alternative : the rank calculation above, using Verbitsky's action on the full cohomology, directly establishes \eqref{eqIHSholomorphicforms}. In particular, it gives $\chi(\mathcal O_Y)=n+1$, which excludes a torus factor in a finite \'etale Beauville-Bogomolov cover and allows the geometric argument to conclude without invoking Schwald's theorem. \\

A separate, readily repairable point occurs at the beginning of the proof of Proposition A.1, on p.956: $h^{1,0}=0$ does not in general exclude a torus in a finite \'etale cover, since the deck group can eliminate invariant one-forms. Under the hypotheses of that proposition, $\chi(\mathcal O)=n+1\neq0$ excludes a positive-dimensional torus factor by multiplicativity of the holomorphic Euler characteristic under finite \'etale covers and on products, as above (see also \cite[Remark 2]{Schwald}).
\end{rem}

\newpage
\bibliographystyle{alpha}
\bibliography{derived-fivefold}
\end{document}